\theoremstyle{definition}
\newtheorem{theorem}{Theorem}
\newtheorem{lemma}{Lemma}
\DeclareMathOperator*{\argmin}{arg\,min}
\title{Fr\'echet Covariance and MANOVA Tests for Random Objects in Multiple Metric Spaces
}
\author{
  Alex Fout \\
  Department of Statistics \\
  Colorado State University\\
  Fort Collins, Colorado\\
  \texttt{fout@colostate.edu} \\
   \And
  Bailey K. Fosdick \\
  Department of Biostatistics and Informatics\\
  University of Colorado, Anschutz Medical Campus \\
  Aurora, Colorado\\
  \texttt{bailey.fosdick@cuanschutz.edu} \\
}
\begin{document}
\maketitle

\begin{abstract}
In this manuscript we consider random objects being measured in multiple metric spaces, which may arise when those objects may be measured in multiple distinct ways.
In this new multivariate setting, we define a Fr\'echet covariance and Fr\'echet correlation in two metric spaces, and a Fr\'echet covariance matrix and Fr\'echet correlation matrix in an arbitrary number of metric spaces.
We prove consistency for the sample Fr\'echet covariance, and propose several tests to compare the means and covariance matrices between two or more groups.
Lastly, we investigate the power and Type I error of each test under a variety of scenarios.
\end{abstract}


\section{Introduction}

Much networks research has traditionally addressed questions pertaining to a single observed network~\cite{newman_structure_2003, kolaczyk_statistical_2014}, focusing on either global characteristics (e.g. degree distribution, number of triadic relationships) or local characteristics (e.g. node attributes, individual dyadic relationships).
Statistics in this context has often been applied at the local scale by treating each node or dyadic relationship as an observation and addressing the dependencies between observations implied by the network structure~\cite{marrs_regression_2019, marrs_standard_2017}.
At the global scale, however, there is only one network, which is a single observation, so there is little opportunity for statistical modeling without placing assumptions on the generating process~\cite{erdos_evolution_1960, albert_statistical_2002, watts_collective_1998}.

However, the advent of network data comprised of multiple networks led researchers to readdress basic statistical concepts in this new setting, such as the sample mean network, population mean network, and measurement errors~\cite{kolaczyk_averages_2020, ginestet_hypothesis_2017}.
With observed networks themselves now being viewed as realizations of random networks having some probability measure, tools from shape analysis, non-Euclidean data sets, and object oriented data analysis were leveraged~\cite{wang_object_2007, bhattacharya_nonparametric_2012, ginestet_strong_2012, feragen_8_2020}.
These tools, such as the notion of a Fr\'echet mean\cite{frechet_les_1948}, have been used to redefine the concepts of consistency and convergence for random objects.
This has led to the development of methods analogous to traditional two sample testing, ANOVA, and regression in service of practitioners dealing with random object data sets~\cite{petersen_frechet_2016}.

The purpose of this manuscript is to continue in the vein of building basic analysis tools for these more complicated random objects in bounded metric spaces.
The appeal of metric spaces is that they encompass several examples of non-Euclidean random objects, including networks, probability distributions, and other multivariate objects~\cite{dubey_frechet_2019}.
On the other hand, metric spaces do not admit an arithmetic mean, voiding many routine statistical models and testing procedures.
In contrast to the Euclidean setting, where the observations in a sample each have an absolute position, a sample in metric space can produce only a set of pairwise distances between observations, with no other information about the underlying geometry, if there is one.
Without an arithmetic mean, statisticians have turned to the Fr\'echet mean for a basis from which to build anew classical statistical methods~\cite{dubey_frechet_2019, petersen_frechet_2019}.
The next section reviews one such development, the analysis of variance, in preparation for our introduction of a multivariate extension.

In this manuscript we propose a definition for Fr\'echet covariance and establish consistency of the sample estimator.
We also discuss various definitions of the Fr\'echet correlation, with particular attention paid to the interpretation of each.
We then propose several statistical tests for differences in group mean and covariance matrices among two or more groups of random objects in bounded metric spaces.
Some test statistics draw inspiration from existing MANOVA tests, others rely on the Riemannian geometry of symmetric positive definite matrices.
In two scenarios, we perform simulation studies to assess the Type I error of each statistic, as well as the power under various departures from the null hypothesis.

\section{Analysis of Variance in Bounded Metric Spaces}

Our work builds upon that of Dubey \& M\"uller~\cite{dubey_frechet_2019}, which studies Fr\'echet variance and an ANOVA procedure for differences in means and variances between two or more groups.
In this section we briefly review their work, though we have adjusted the notation to be consistent with the rest of this manuscript.
We begin with the following definitions.
Let $\Omega_s$ be a non-empty metric space with metric $d_s$ and generic point $\omega \in \Omega_s$.
We have added the $s$ subscripts in preparation for adding more metric spaces later.
Suppose random object $X_s \in \Omega_s$ has probability measure $P$.
Then the population Fr\'echet mean is given by
\begin{equation}
    \mu_s = \underset{\omega \in \Omega_s} \argmin E[d_s^2(\omega, X_s)],
\end{equation}
and for an independent and identically distributed sample $\{{X_{si}}\}$ with $i\in \{1, \ldots, n\}$, each $X_{si}$ having probability measure $P$, the corresponding sample mean is
\begin{equation}
    \hat{\mu}_s = \underset{\omega \in \Omega_s} \argmin \frac{1}{n} \sum_{i=1}^{n} d_s^2(\omega, X_{si}).
\end{equation}
If the metric space happens to be Euclidean space equipped with the Euclidean $L^2$ metric, then the Fr\'echet mean is the same as the arithmetic mean.
The Fr\'echet variance is given by
\begin{equation}
    \sigma_s^2 = E[d_s^2(\mu_s, X_{s})] \label{eq:fvar}
\end{equation}
and the corresponding sample variance is
\begin{equation}
    \widehat{\sigma_s^2} = \frac{1}{n} \sum_{i = 1}^n d_s^2(\hat{\mu}_s, X_{si}). \label{eq:fvar_samp}
\end{equation}

Petersen \& M\"uller established the consistency of the sample Fr\'echet mean~\cite{petersen_frechet_2019}, and Dubey \& M\"uller established the consistency and asymptotic normality of the sample Fr\'echet variance~\cite{dubey_frechet_2019}.
From these results, Dubey \& M\"uller developed an ANOVA procedure to test for equality of variance and means simultaneously among several groups for a random variable defined on a metric space.

Dubey \& M\"uller's test combines two statistics, $F_s$ and $U_s$, where $F_s$ is analogous to the $F_s$ statistic from classical ANOVA tests, and $U_s$ (which is a ``U-statistic'' in the sense of~\cite{hoeffding_class_1948}) tests for equality of variances.
While typically equality of group variances is an assumption to be checked before performing the ANOVA procedure (for example, see~\cite{levene_robust_1960}), the metric space ANOVA tests against a null of the means and variances being equal among the groups. 
One advantage of this approach is that it produces an asymptotic test thanks to the asymptotic distribution of $U_s$ and the fact that $F_s$ is $o_p(n^{-1/2})$.
Their proposed test also does not require normality assumptions of the groups, which seems appropriate given the diversity of random objects which are found it the setting of bounded metric spaces.
We outline the major components of Dubey \& M\"uller's test below insofar as they are relevant to our approach later.

Given $j$ groups denoted by $j \in \{1, \ldots, J\}$, let observation $X_{jsi}$ be the $i$th observation from group $j$, where $i \in \{1, \ldots, n_j\}$, and assume $\{X_{jsi}\}$ are independent across all $i$ and $j$, and identically distributed within each group $j$.
Define the group specific means and variances as 
\begin{align}
    \mu_{js} &= \underset{\omega \in \Omega_s} \argmin E[d_s^2(\omega, X_{jsi})], \quad j\in \{1, \ldots, J\} \\
    \sigma_{js}^2 &= E[d_s^2(\mu, X_{jsi})],
\end{align}
and corresponding sample estimators as
\begin{align}
    \hat{\mu}_{js} &= \underset{\omega \in \Omega_s} \argmin \frac{1}{n_j} \sum_{i=1}^{n_j} d_s^2(\omega, X_{jsi}), & j\in \{1, \ldots, J\},\\
    \widehat{\sigma_{js}^2} &= \frac{1}{n_j} \sum_{i = 1}^{n_j} d_s^2(\hat{\mu}_{js}, X_{jsi}) & j\in \{1, \ldots, J\}.
\end{align}
The hypothesis being tested is
\begin{align}
 \text{H}_0:\quad\mu_{1s} = \ldots = \mu_{Js},\quad \sigma_{1s}^2 = \ldots = \sigma_{Js}^2.
\end{align}
Define the pooled sample mean and variance as
\begin{align}
    \hat{\mu}_{ps} &= \underset{\omega \in \Omega_s} \argmin \frac{1}{n_j} \sum_{j=1}^J \sum_{i=1}^{n_j} d_s^2(\omega, X_{jsi}), \\
    \widehat{\sigma_{ps}^2} &= \sum_{j=1}^J \frac{1}{n_j} \sum_{i = 1}^{n_j} d_s^2(\hat{\mu}_{js}, X_{jsi}).
\end{align}
Since the sample sizes are not necessarily equal, let $\gamma_j = n_j / n$ represent the proportion of observations belonging to group $j$.
Then define the statistics 
\begin{align}
    F_s &= \widehat{\sigma_{ps}^2} - \sum_{j=1}^J \gamma_j \widehat{\sigma_{js}^2} \\
    U_s &= \sum_{j < j'} \frac{\gamma_j \gamma_{j'}}{\widehat{\text{Var}}(\widehat{\sigma_{js}^2}) \widehat{\text{Var}}(\widehat{\sigma_{j's}^2})} \left( \widehat{\sigma_{j}^2} - \widehat{\sigma_{j'}^2} \right)^2,
\end{align}
where 
\begin{align}
    \widehat{\text{Var}}(\widehat{\sigma_{js}^2}) = \frac{1}{n_j} \sum_{i = 1}^{n_j} d_s^4(\hat{\mu}_{js}, X_{jsi}) - \left\{ \frac{1}{n_j} \sum_{i = 1}^{n_j} d_s^2(\hat{\mu}_{js}, X_{jsi}) \right\}^2, \quad j\in \{1, \ldots, J\}
\end{align}
is an estimate of the variance of $\widehat{\sigma_{js}^2}$.
The $F_s$ statistic can be seen as estimating the between group variances by subtracting the average within group variance from the the total variance, and is analogous to the numerator of the Euclidean ANOVA statistic.
The $U_s$ statistic is similar to the test statistic underlying Levene's test, and when appropriately scaled, is asymptotically $\chi^2_{(J-1)}$ as $n\rightarrow \infty$, assuming each $\gamma_j$ converges as well.
To perform the metric space ANOVA, Dubey \& M\"uller propose a test statistic $T_s$ defined as
\begin{align}
    T_s = \frac{nU_s}{\sum_{j=1}^J \frac{\gamma_j}{\widehat{\text{Var}}(\widehat{\sigma_{js}^2})}} + \frac{nF_s^2}{\sum_{j=1}^J \gamma_j^2 \widehat{\text{Var}}(\widehat{\sigma_{js}^2})}\label{eq:frechet_anova}.
\end{align}
Under some existence and uniqueness assumptions for the population means and variances, $T_s$ converges in distribution to $\chi^2_{(J-1)}$ under the null hypothesis as $n \rightarrow \infty$.

Simulation studies show that Dubey \& M\"uller's metric space ANOVA performs well when considering scale-free networks, probability distributions, and even multivariate Euclidean data (each object being an observation in $\mathbb{R}^5$).
Our purpose in the rest of this manuscript is to extend this approach to the setting of two or more metric spaces, to test for differences in two or more groups measured in multiple metric spaces, analogous to a metric space MANOVA procedure.
We begin this with the introduction of Fr\'echet covariance in the next section.

\section{Fr\'echet Covariance}

Consider now two distinct metric spaces, $\Omega_s$ with metric $d_s$ and $\Omega_{s'}$ with metric $d_{s'}$, and random object vector $(X_s, X_{s'}) \in \Omega_s \times \Omega_{s'}$ with joint distribution $P_{ss'}$.
Strictly speaking, $(X_s, X_{s'})$ is an ordered pair not necessarily belonging to a vector space, but we use the term ``vector'' for convenience.
For compactness we will omit the subscripts of the metrics $d_s$ and $d_{s'}$ and use simply $d$ when it is obvious from context which one we mean.
In particular, $d_s$ measures distances in $\Omega_s$ and $d_{s'}$ measures distances in $\Omega_{s'}$.
We define the population mean vector as $(\mu_s, \mu_{s'})$, with elements given by:
\begin{align}
    \mu_* &= \underset{\omega \in \Omega_*} \argmin E[d^2(\omega, X_*)], \quad * \in \{s,s'\}.
\end{align}
Given a set of independent and identically distributed observations $\{(X_{si}, X_{s'i})\}$ with $i \in \{1, \ldots, n\}$, the sample mean vector $(\hat{\mu}_s, \hat{\mu}_{s'})$ has elements 
\begin{align}
    \hat{\mu}_* &= \underset{\omega \in \Omega_{*}} \argmin \frac{1}{n} \sum_{i=1}^{n} d^2(\omega, X_{*i}), \quad * \in \{s,s'\}.
\end{align}
The population and sample variance vectors, $(\sigma_s^2, \sigma_{s'}^2)$ and $(\widehat{\sigma_s^2}, \widehat{\sigma_{s'}^2})$, are defined analogously to \eqref{eq:fvar}-\eqref{eq:fvar_samp} with elements
\begin{align}
    \sigma_* &= \underset{\omega \in \Omega_{*}} \argmin E[d^2(\omega, X_*)] & *\in \{s,s'\}, \\
    \widehat{\sigma_*^2} &= \frac{1}{n} \sum_{i = 1}^n d^2(\hat{\mu_*}, X_{*i}) & *\in \{s,s'\}.
\end{align}
Now that there are multiple metric spaces, we may consider the dependency which may exist between $X_s$ and $X_{s'}$, the two elements of the random object vector.
Before proposing how to do so, however, we first address exactly what dependence means in a two-metric-space context.

Because general metric spaces lack a coordinate system which endows direction, it does not make sense to talk about the tendency for $X_s$ to \emph{increase} or \emph{decrease} as $X_{s'}$ \emph{increases}.
However, $X_s$ and $X_{s'}$ can be characterized by their distance from their respective means, and one sensible notion of dependence would be whether $X_s$ tends to move away from or toward its mean as $X_{s'}$ moves away from its mean.
This suggests the following definitions for the population and sample Fr\'echet covariance:
\begin{align}
    \sigma_{ss'} &= E[d(X_s, \mu_s) d(X_{s'}, \mu_{s'})]
    \label{eq:cov_def1}\\
    \widehat{\sigma}_{ss'} &= \frac{1}{n} \sum_{i=1}^n d(X_{si}, \hat{\mu}_s) d(X_{s'i}, \hat{\mu}_{s'}),
    \label{eq:cov_def1_samp}
\end{align}
which we expect to be large if $X_s$ and $X_{s'}$ tend to deviate from their mean together, and small if not.
This definition, however, always produces a positive covariance, since distances are always positive, so it does not clearly indicate when $X_s$ and $X_{s'}$ both tend to move away from their respective means at the same time, or if one tends to move away from its mean as the other approaches its mean.
Indeed, unlike the Fr\'echet variance, which reduces to the classical definition when applied to Euclidean objects with the Euclidean metric, the definition in \eqref{eq:cov_def1} reduces to $E[|X_s-\mu_s||X_{s'}-\mu_{s'}|]$, which is not the Euclidean covariance.

Using the definition for covariance in \eqref{eq:cov_def1} allows us to express the covariance matrix of $(X_s,X_{s'})$ as $\Sigma = E[d_\mu(X_s,X_{s'}) d_\mu(X_s,X_{s'})^T]$, where the distance vector $d_\mu(X_s,X_{s'})^T\allowbreak =\allowbreak (d(X_s, \mu_s),\allowbreak d(X_{s'}, \mu_{s'}))$.
Both the population and sample versions are positive semi-definite, positive definite if $X_s$ and $X_{s'}$ are non-degenerate, as shown later in Lemma \ref{lem:cov_mat_psd}.

An alternative definition of covariance is motivated by the desire that the sign of the covariance be meaningful, which we call the \emph{centered Fr\'echet covariance}:
\begin{align}
    \sigma_{ss'}^{'} &= E\left\{ \left[ d(X_s, \mu_s) - E(d(X_s, \mu_s)) \right] \left[ d(X_{s'}, \mu_{s'}) - E(d(X_{s'}, \mu_{s'})) \right] \right\}
    \label{eq:cov_def2}\\
    \widehat{\sigma}_{ss'}^{'} &= \frac{1}{n}\sum_{i=1}^n \left[d(X_{si}, \hat{\mu}_s) - \overline{d}_s\right] \left[d(X_{s'i}, \hat{\mu}_{s'}) - \overline{d}_{s'}\right]
    \label{eq:cov_def2_samp}
\end{align}
where $\overline{d}_s = \frac{1}{n}\sum_{i=1}^n d(X_{si}, \hat{\mu}_s)$ and $\overline{d}_{s'} = \frac{1}{n}\sum_{i=1}^n d(X_{s'i}, \hat{\mu}_{s'})$.
Equation \eqref{eq:cov_def2} is simply the Euclidean covariance between $d(X_s, \mu_s)$ and $d(X_{s'}, \mu_{s'})$, and \eqref{eq:cov_def2_samp} is the sample analog.
If both the $X_s$ and $X_{s'}$ distances increase / decrease together, this will be positive.
Unlike \eqref{eq:cov_def1} and \eqref{eq:cov_def1_samp}, equations \eqref{eq:cov_def2} and \eqref{eq:cov_def2_samp} can be negative, when the $X_s$ distances and $X_{s'}$ distances have an inverse relationship.
This is arguably the nicer interpretation, though it does not align as well with Dubey \& M\"uller's definition of Fr\'echet variance in \eqref{eq:fvar}.

For both covariance definitions, we can define correlation by normalizing by the appropriate standard deviations.
For the definitions in \eqref{eq:cov_def1} and \eqref{eq:cov_def1_samp}, we divide by the square root of Dubey \& M\"uller's variance to produce the \emph{non-centered Fr\'echet correlation}:
\begin{align}
    \rho_{ss'} &= \frac{E[d(X_s, \mu_s) d(X_{s'}, \mu_{s'})]}{\sqrt{E[d^2(X_s, \mu_s)] E[d^2(X_{s'}, \mu_{s'})]}}, \label{eq:cor_def1} \\
    \hat{\rho}_{ss'} &= \frac{\frac{1}{n}\sum_{i=1}^n d(X_{si}, \hat{\mu}_s) d(X_{s'i}, \hat{\mu}_{s'})}{\sqrt{\frac{1}{n}\sum_{i=1}^n d^2(X_{si}, \hat{\mu}_s) \frac{1}{n}\sum_{i=1}^n d^2(X_{s'i}, \hat{\mu}_{s'})]}}.
    \label{eq:cor_def1_samp}
\end{align}
For the definitions in \eqref{eq:cov_def2} and \eqref{eq:cov_def2_samp}, we simply use the Euclidean version of correlation on the distances to produce the \emph{centered Fr\'echet correlation}:
\begin{align}
    \rho_{ss'}' &= \frac{E\left[\left(d(X_s, \mu_s)-Ed(X, \mu_s)\right) \left(d(X_{s'}, \mu_{s'}) - Ed(X_{s'}, \mu_{s'})\right)\right]}{\sqrt{E\left[ \left(d(X_s, \mu_s) - Ed(X_s, \mu_s)\right)^2\right] E\left[\left(d(X_{s'}, \mu_{s'})-Ed(X_{s'}, \mu_{s'})\right)^2\right]}},  \label{eq:cor_def2} \\
    \hat{\rho}_{ss'}' &= \frac{ \frac{1}{n}\sum_{i=1}^n \left(d(X_{si}, \hat{\mu}_s) - \overline{d}_s\right) \left(d(X_{s'i}, \hat{\mu}_{s'}) - \overline{d}_{s'}\right) }{\sqrt{\frac{1}{n}\sum_{i=1}^n \left(d(X_{si}, \hat{\mu}_s)-\overline{d}_s\right)^2  \frac{1}{n}\sum_{i=1}^n \left(d(X_{s'i}, \hat{\mu}_{s'}) - \overline{d}_{s'}\right)^2}}. \label{eq:cor_def2_samp}
\end{align}
The non-centered correlation lies in $[0,1]$, and the centered version is in $[-1,1]$.

To illustrate the behavior of each definition of correlation, we simulated data from two metric spaces, both Euclidean, and each equipped with the $L^1$ metric.
The first random variable is $X_1\in \Omega_1 = \mathbb{R}$ and the second random variable is $X_2 \in \Omega_2 = \mathbb{R}$.
In a Euclidean sense, we are sampling bivariate data in $\mathbb{R}^2$.
We simulate data under 5 different scenarios.
For all scenarios the $X_1$ values are generated from  a $\text{Uniform}(0,1)$ distribution.
The $X_2$ values are generated either as a noisy function of $X_1$ (scenarios 1-4) or from a $\text{Uniform}(0,1)$ distribution independent of $X_1$ (scenario 5).
For each scenario, we compute both the non-centered and centered sample Fr\'echet correlation, treating each bivariate observation as an object vector, $(X_{1i}, X_{2i})$.
Figure \ref{fig:cor_examples} shows a scatter plot of observations for each scenario,  along with the non-centered and centered Fr\'echet correlations of each.
\begin{figure}
    \centering
    \includegraphics[width=0.98\textwidth]{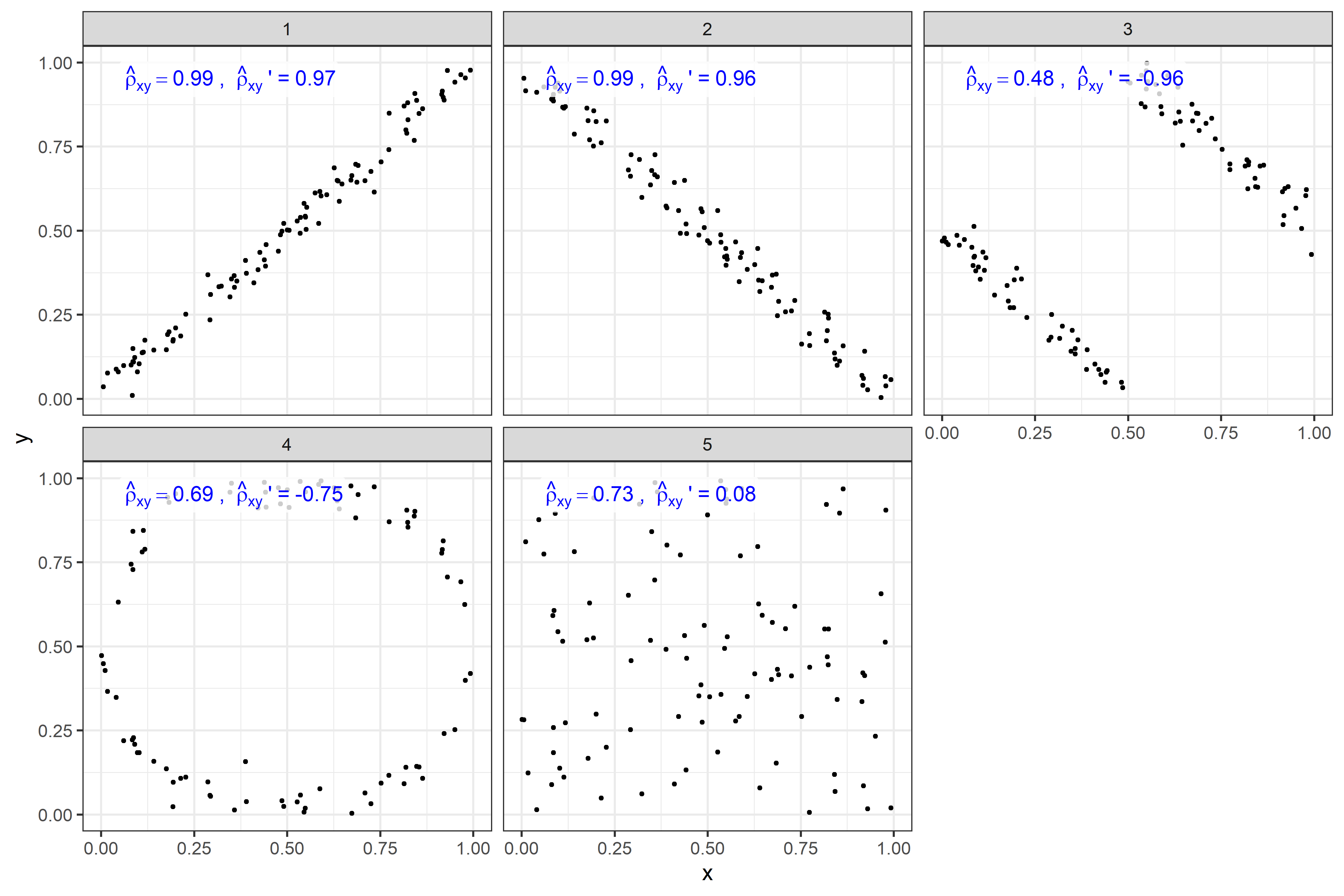}
    \caption{Non-centered ($\hat{\rho}_{12}$) and centered ($\hat{\rho}_{12}'$) Fr\'echet correlation for 5 Euclidean bivariate scenarios. Scenarios 1 and 2 illustrate how both definitions show strong positive correlation when Euclidean correlation is either strongly positive or strongly negative. Scenarios 3 and 4 highlight how the centered Fr\'echet correlation is negative when $X_1$ and $X_2$ tend to move in opposite directions relative to their means. Scenario 5 shows that the centered Fr\'echet correlation near zero when $X_1$ and $X_2$ are independent.}
    \label{fig:cor_examples}
\end{figure}
In terms of Euclidean correlation, scenarios 1 and 2 show strong positive and negative correlation respectively, scenario 3 shows weaker positive correlation, and scenarios 4 and 5 have correlation near zero.

The non-centered Fr\'echet correlation measures how much the $X_1$ and $X_2$ deviate from their mean at the same time. 
In this regard, scenarios 1 and 2 both exhibit strong positive non-centered Fr\'echet correlation, as the points deviate from $(0.5, 0.5)$ together.
In scenarios 3 and 4, $X_1$ values far from their mean are paired with $X_2$ values near their mean, and vice versa, resulting in smaller (but still positive) non-centered Fr\'echet correlations.
Finally, scenario 5 shows how the non-centered Fr\'echet correlation disagrees with the classical Euclidean definition.

The centered Fr\'echet correlation focuses on the degree to which the distance from $X_1$ to its mean is above or below average, and similarly for $X_2$.
For scenarios 1 and 2, $X_1$ and $X_2$ are both above and below average distance simultaneously, leading to strong positive correlation.
In scenario 3, $X_1$ is near its mean when $X_2$ is far away from its and vice versa, leading to a strong negative centered Fr\'echet correlation, similarly for scenario 4.
In scenario 5, when $X_1$ and $X_2$ are generated independently, the centered correlation is near zero.
Scenarios 3-5 in particular show how the centered version of correlation retains some desirable characteristics of Euclidean correlation and is arguably easier to interpret than the non-centered version.

The examples in Figure \ref{fig:cor_examples} illustrate how both definitions of Fr\'echet correlation differ from the Euclidean notion when looking at Euclidean data, but this is expected since metric spaces force us to redefine what it means to be correlated using only distances.
In the rest of this paper, we will consider the non-centered version of Fr\'echet covariance, as it integrates nicely with Dubey \& M\"uller's Fr\'echet variance.

\section{Properties of Fr\'echet Covariance}

In this section we define the (non-centered) Fr\'echet covariance matrix, and establish consistency for the (non-centered) Fr\'echet covariance defined in \eqref{eq:cov_def1_samp}.
We begin by considering the Fr\'echet covariance matrix constructed using Dubey \& M\"uller's Fr\'echet variances on the diagonal and our non-centered Fr\'echet covariance on the off-diagonals.
Given $S$ metric spaces $\{\Omega_{s}\}$ for $s \in \{1, \ldots, S\}$, and random object vector $\textbf{X} = ( X_1, \ldots, X_S ) \in \Omega_1 \times \ldots \times \Omega_S$ with probability measure $P: \Omega_1 \times \ldots \times \Omega_S \rightarrow \mathbb{R}_{\geq 0}$ and Fr\'echet mean vector $( \mu_{1}, \ldots, \mu_{S})$.
Define the $(s,s')$ element of the population covariance matrix $\Sigma$ and the sample covariance matrix $\widehat{\Sigma}$ as 
\begin{align}
    \Sigma(s,s') &= \begin{cases} \sigma_s^2 & s = s' \\ \sigma_{ss'} & s \neq s' \end{cases}, \\
    \widehat{\Sigma}(s,s') &= \begin{cases} \widehat{\sigma^2}_s & s = s' \\ \widehat{\sigma}_{ss'} & s \neq s' \end{cases}.
\end{align}
Defining the vector of distances $d_\mu(\textbf{X}) = \left( d(X_1, \mu_1)\allowbreak , \ldots, \allowbreak d(X_S, \mu_S) \right)^T$, we can write $\Sigma = E[d_\mu d_\mu^T]$.
Similarly, if $X_{si}$ is the $i$th observation in the $s$th metric space and $\hat{\mu}_s$ the sample Fr\'echet mean in the $s$th metric space, define $d_{\hat{\mu}i} = \allowbreak \left( d(X_{1i}, \hat{\mu}_{1}), \ldots, d(X_{si}, \hat{\mu}_{s}) \right)^T$ so that $\widehat{\Sigma} = \frac{1}{n} \sum_{i=1}^n d_{\hat{\mu}i} d_{\hat{\mu}i}^T$.
This makes it clear the both $\Sigma$ and $\widehat{\Sigma}$ are positive semi-definite:

\begin{lemma}
    The Fr\'echet population covariance matrix $\Sigma$ and sample covariance matrix $\widehat{\Sigma}$ are both positive semi-definite. 
    If the elements of $d_\mu$ are non-degenerate, then $\Sigma$ is positive definite.
    If the vectors $\{ d_{\hat{\mu}1}, \ldots, d_{\hat{\mu}n} \}$ span $\mathbb{R}^S$, then $\widehat{\Sigma}$ is positive definite.
    \label{lem:cov_mat_psd}
\end{lemma}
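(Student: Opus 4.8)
The plan is to exploit the representations $\Sigma = E[d_\mu d_\mu^T]$ and $\widehat{\Sigma} = \frac{1}{n}\sum_{i=1}^n d_{\hat\mu i} d_{\hat\mu i}^T$ directly, since a matrix of this Gram/outer-product form is automatically positive semi-definite. For any fixed $v \in \mathbb{R}^S$, I would write $v^T \Sigma v = E[v^T d_\mu d_\mu^T v] = E[(d_\mu^T v)^2] \ge 0$, using linearity of expectation to pull $v$ inside (it is a constant vector), so $\Sigma \succeq 0$. Likewise $v^T \widehat{\Sigma} v = \frac{1}{n}\sum_{i=1}^n (d_{\hat\mu i}^T v)^2 \ge 0$, so $\widehat{\Sigma} \succeq 0$. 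One small point to check is integrability, i.e. that the entries of $E[d_\mu d_\mu^T]$ are finite; this follows from the standing assumption that the metric spaces are bounded (and is in any case needed for the Fréchet variances to be defined), so $(d_\mu^T v)^2$ is a bounded random variable and the expectation exists.

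For the definiteness claims I would argue by contraposition. Suppose $\Sigma$ is not positive definite; then there is a nonzero $v$ with $v^T \Sigma v = E[(d_\mu^T v)^2] = 0$. Since $(d_\mu^T v)^2 \ge 0$, this forces $d_\mu^T v = 0$ almost surely, i.e. $\sum_{s=1}^S v_s\, d(X_s,\mu_s) = 0$ a.s. with not all $v_s$ zero — a nontrivial linear dependence among the random variables $d(X_1,\mu_1),\ldots,d(X_S,\mu_S)$. This is exactly the negation of the hypothesis that ``the elements of $d_\mu$ are non-degenerate'' (interpreting non-degeneracy as: the only linear combination of the $d(X_s,\mu_s)$ that vanishes a.s. is the trivial one — I would state this interpretation explicitly in the proof). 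Hence under that hypothesis $\Sigma \succ 0$. The sample version is even more transparent: if $v^T \widehat{\Sigma} v = \frac{1}{n}\sum_i (d_{\hat\mu i}^T v)^2 = 0$ then $d_{\hat\mu i}^T v = 0$ for every $i$, so $v$ is orthogonal to all of $d_{\hat\mu 1},\ldots,d_{\hat\mu n}$; if those vectors span $\mathbb{R}^S$ then $v = 0$, so $\widehat{\Sigma} \succ 0$.

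I do not anticipate a serious obstacle here — the result is essentially the standard fact that second-moment/Gram matrices are PSD. The only genuine subtlety is bookkeeping about what ``non-degenerate'' means: the naive reading (each individual $d(X_s,\mu_s)$ is a non-constant random variable) is \emph{not} sufficient to rule out a linear dependence among them, so the proof should either adopt the stronger ``no nontrivial a.s. linear relation'' reading or note the gap. I would therefore spend one sentence pinning down that definition, then the PSD half is two lines and each definiteness half is two or three lines via the contrapositive argument above.
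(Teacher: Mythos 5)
Your proposal is correct and follows essentially the same route as the paper: both establish positive semi-definiteness from the outer-product representations $\Sigma = E[d_\mu d_\mu^T]$ and $\widehat{\Sigma} = \frac{1}{n}\sum_i d_{\hat\mu i} d_{\hat\mu i}^T$, and both derive definiteness of $\widehat{\Sigma}$ from the spanning hypothesis by showing $d_{\hat\mu i}^T z = 0$ for all $i$ forces $z=0$. If anything, your treatment of the population case is more careful than the paper's, which simply asserts strict positivity when ``the elements of $d_\mu$ are non-degenerate''; as you observe, element-wise non-degeneracy does not by itself exclude a nontrivial almost-sure linear relation among the $d(X_s,\mu_s)$, so pinning down the stronger reading is a genuine (if minor) improvement rather than a deviation.
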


\begin{proof}[Proof of Lemma \ref{lem:cov_mat_psd}]
    Given vector $\mathbf{z} \in \mathbb{R}^S$, then 
    \begin{align}
        z^T \Sigma z &= z^T E[d_\mu d_\mu^T] z
        = E[z^T d_\mu d_\mu^T z] 
        = E[||d_\mu^T z||^2] \geq 0.
    \end{align}
    The quantity is strictly greater than zero if the elements of $d_\mu$ are non-degenerate.
    Similarly for $\widehat{\Sigma}$,
    \begin{align}
        z^T \widehat{\Sigma} z &= \frac{1}{n} \sum_{i=1}^n z^T d_{\hat{\mu}i} d_{\hat{\mu}i}^T z 
        = \frac{1}{n} \sum_{i=1}^n ||d_{\hat{\mu}i}^T z||^2 \geq 0.
        \label{eq:lemma1}
    \end{align}
    If the vectors $\{ d_{\hat{\mu}1}, \ldots, d_{\hat{\mu}n} \}$ span $\mathbb{R}^D$, then any vector $z$ can be written as $a_1 d_{\hat{\mu}1} + \cdots + a_n d_{\hat{\mu}n}$ so that $z^T z = a_1 d_{\hat{\mu}1}^T z + \cdots + a_n d_{\hat{\mu}n}^T z$.
    Hence if \eqref{eq:lemma1} is equal to zero, then $d_{\hat{\mu}i}^T z = 0$ for each $i$, implying $z^T z = 0$, so $z = 0$.
\end{proof}

From now on we assume that all population and sample covariances are strictly positive definite.
The remaining results of this section will focus on the the covariance of $(X_s, X_{s'}) \in \Omega_s \times \Omega_{s'}$, and require the following assumptions:

\begin{enumerate}
    \item [A.1] The population and sample Fr\'echet means in each metric space, $(\mu_s, \mu_{s'})$ and $(\hat{\mu}_s, \hat{\mu}_{s'})$, exist and are unique, and for any $\epsilon > 0$, $\inf_{d(\omega, \mu_s) > \epsilon} E[d^2(\omega, X_s)] > E[d^2(\mu_s, X_s)]$ and $\inf_{d(\omega, \mu_{s'}) > \epsilon} E[d^2(\omega, X_{s'})] > E[d^2(\mu_{s'}, X_{s'})]$. In other words, in each metric space, the expected distance from the mean is uniquely minimum compared to the expected distance from any other point $\omega$ which is further than $\epsilon$ from the mean.
\end{enumerate}

This assumption is analogous to one made by Dubey \& M\"uller in \cite{dubey_frechet_2019}, and Petersen \& M\"uller show in \cite{petersen_frechet_2019} that it is satisfied by the following two metric spaces: univariate probability distributions with compact support and finite second moment equipped with the $L^2$-Wasserstein metric, the set of correlation matrices of a fixed dimension equipped with the Frobenius metric.
Dubey \& M\"uller additionally claim that the set of graph Laplacians of connected, undirected, simple graphs of a fixed dimension also satisfy the assumptions~\cite{dubey_frechet_2019}.
Under Assumptions A.1, we establish consistency of the sample Fr\'echet covariance $\widehat{\sigma}_{ss'}$.

\begin{theorem}[Consistency]
    Under Assumption 1 above, the sample covariance converges to the population covariance in probability, that is $\widehat{\sigma}_{ss'} \overset{p}\rightarrow \sigma_{ss'}$ as $n\rightarrow\infty$.
    \label{thm:cov_consistency}
\end{theorem}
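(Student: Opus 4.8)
The plan is to reduce the problem to an ordinary law of large numbers by comparing $\widehat{\sigma}_{ss'}$ to the ``oracle'' quantity $\widetilde{\sigma}_{ss'} = \frac{1}{n}\sum_{i=1}^n d(X_{si}, \mu_s)\, d(X_{s'i}, \mu_{s'})$, in which the sample Fr\'echet means are replaced by the (unknown) population Fr\'echet means. Since $(X_{si},X_{s'i})$ are i.i.d.\ and the metric spaces are bounded, the products $d(X_{si},\mu_s)\, d(X_{s'i},\mu_{s'})$ are i.i.d.\ and integrable with mean $\sigma_{ss'}$, so the weak law of large numbers gives $\widetilde{\sigma}_{ss'} \overset{p}{\rightarrow} \sigma_{ss'}$ directly. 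It then suffices to show $\widehat{\sigma}_{ss'} - \widetilde{\sigma}_{ss'} \overset{p}{\rightarrow} 0$, which is where the estimated means must be handled.

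For the difference I would telescope term by term through a cross term: writing $a_i = d(X_{si},\hat{\mu}_s)$, $b_i = d(X_{s'i},\hat{\mu}_{s'})$, $c_i = d(X_{si},\mu_s)$, $e_i = d(X_{s'i},\mu_{s'})$, we have $a_ib_i - c_ie_i = (a_i - c_i)b_i + c_i(b_i - e_i)$. The reverse triangle inequality gives $|a_i - c_i| \le d(\hat{\mu}_s,\mu_s)$ and $|b_i - e_i| \le d(\hat{\mu}_{s'},\mu_{s'})$, uniformly in $i$, so averaging and applying the triangle inequality yields
\begin{align*}
  \bigl|\widehat{\sigma}_{ss'} - \widetilde{\sigma}_{ss'}\bigr|
  \;\le\; d(\hat{\mu}_s,\mu_s)\cdot \frac{1}{n}\sum_{i=1}^n d(X_{s'i},\hat{\mu}_{s'})
  \;+\; d(\hat{\mu}_{s'},\mu_{s'})\cdot \frac{1}{n}\sum_{i=1}^n d(X_{si},\mu_s).
\end{align*}
Each average on the right is $O_p(1)$: the first is at most $\frac{1}{n}\sum_i d(X_{s'i},\mu_{s'}) + d(\mu_{s'},\hat{\mu}_{s'})$, which is stochastically bounded (and in any case bounded by the diameter of $\Omega_{s'}$), and the second converges in probability to $E[d(X_s,\mu_s)] < \infty$ by the law of large numbers. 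Meanwhile, Assumption A.1 is precisely the well-separation hypothesis under which Petersen \& M\"uller~\cite{petersen_frechet_2019} establish consistency of the sample Fr\'echet mean, so $d(\hat{\mu}_s,\mu_s) \overset{p}{\rightarrow} 0$ and $d(\hat{\mu}_{s'},\mu_{s'}) \overset{p}{\rightarrow} 0$. Hence each product on the right-hand side is $o_p(1)\cdot O_p(1) = o_p(1)$, so $\widehat{\sigma}_{ss'} - \widetilde{\sigma}_{ss'} \overset{p}{\rightarrow} 0$, and combining with the first paragraph gives $\widehat{\sigma}_{ss'} \overset{p}{\rightarrow} \sigma_{ss'}$.

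The only genuinely nontrivial ingredient is the consistency of the sample Fr\'echet means, $d(\hat{\mu}_*,\mu_*)\overset{p}{\rightarrow}0$; since A.1 is exactly the hypothesis of the cited Petersen \& M\"uller result, this is handled by citation, and the remaining obstacle is merely the bookkeeping needed to show that the averages multiplying $d(\hat{\mu}_*,\mu_*)$ are uniformly stochastically bounded --- which boundedness of the metric spaces makes immediate. (Absent boundedness, one would replace this step by a moment condition such as $E[d^2(X_s,\mu_s)]<\infty$ plus a uniform-integrability argument, but that is unnecessary in the bounded-metric-space setting of this paper.) I would also remark that the same decomposition shows, more strongly, that $\widehat{\sigma}_{ss'} = \widetilde{\sigma}_{ss'} + o_p(1)$, which is useful for later asymptotic-distribution arguments.
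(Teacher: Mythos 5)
Your proof is correct and follows essentially the same route as the paper's: both isolate the law-of-large-numbers term $\frac{1}{n}\sum_i d(X_{si},\mu_s)d(X_{s'i},\mu_{s'})$, control the remainder via the reverse triangle inequality $|d(X_{si},\hat{\mu}_s)-d(X_{si},\mu_s)|\le d(\hat{\mu}_s,\mu_s)$ together with consistency of the sample Fr\'echet means under A.1 and boundedness of the spaces. The only cosmetic difference is that you telescope the product in two terms while the paper uses a symmetric three-term decomposition with an explicit second-order cross term; the ingredients and conclusions are identical.
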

The proof of Theorem \ref{thm:cov_consistency} is provided in Appendix.

\section{Tests for Differences in Means and Variance / Covariance Matrices}

Our goal is to investigate various tests for differences between $J$ groups in $S$ metric spaces, as an analog to the MANOVA procedure.
We will consider various tests based on several test statistics, but in all cases the null hypothesis we wish to test is simultaneous equality of the mean vector across groups and equality of covariance matrix across groups.
Letting $\bm{\mu}_{j} = (\mu_{j1}, \ldots, \mu_{jS})^T$ and $\Sigma_j$ be the mean vector and covariance matrix for group $j$ respectively, then the null hypothesis can be expressed as
\begin{align}
    \text{H}_0:\quad \bm{\mu}_1 = \ldots = \bm{\mu}_J,\quad \Sigma_1 = \ldots = \Sigma_J.
    \label{eq:sms_null}
\end{align}
If the groups deviate from the null hypothesis, we expect the \textit{sample} means and \textit{sample} covariance matrices to deviate from one another, so we seek test statistics which quantify these differences.

\subsection{Adaptations of Classical MANOVA Statistics}

Several Euclidean MANOVA tests, including Wilks' lambda~\cite{wilks_certain_1932}, Pillai \& Bartlett's trace~\cite{pillai_new_1955}, Lawley \& Hotelling's trace~\cite{hotelling_generalized_1951}, and Roy's Root~\cite{roy_aspects_1958}, test for differences in means by relying on a partition of the total sums of squares and cross products matrix into a between group comoponent (the treatment) and a within group component (the residuals).
Such a partition is possible in Euclidean space, thanks to the Pythagorean theorem, but not always possible in generic metric spaces.
We can circumvent this problem, however, following the approach of Dubey \& M\"uller's $F_s$ statistic, by comparing the pooled covariance matrix to the weighted mean of group covariance matrices.

Let $\widehat{\sigma}_{jss'}$ be the covariance of group $j$, and $\widehat{\sigma}_{pss'}$ be the pooled covariance.
Define the $(s,s')$ element of the population and sample pooled covariance matrices as 
\begin{align}
    \Sigma_{p}(s,s') &= \begin{cases} \sigma_{ps}^2 & s = s' \\ \sigma_{pss'} & s \neq s' \end{cases}, \\
    \widehat{\Sigma}_{p}(s,s') &= \begin{cases} \widehat{\sigma}_{ps}^2 & s = s' \\ \widehat{\sigma}_{pss'} & s \neq s' \end{cases}.
\end{align}
Similarly, define the $(s,s')$ element of the population and sample group weighted mean covariance matrices as 
\begin{align}
    \Sigma_{g}(s,s') &= \begin{cases} \sum_{j=1}^J \gamma_j \sigma^2_{js} & s = s' \\ \sum_{j=1}^J \gamma_j \sigma_{jss'} & s \neq s' \end{cases}, \\
    \widehat{\Sigma}_{g}(s,s') &= \begin{cases} \sum_{j=1}^J \gamma_j \widehat{\sigma}^2_{js} & s = s' \\ \sum_{j=1}^J \gamma_j \widehat{\sigma}_{jss'} & s \neq s' \end{cases}.
\end{align}
Whereas $\Sigma_p$ and $\widehat\Sigma_p$ capture the total variation across all groups, $\Sigma_g$ and $\widehat\Sigma_g$ capture the within group variation.
When all groups share the same mean vector and covariance matrix, then $\Sigma_p = \Sigma_g$, so $\widehat\Sigma_p$ will be close to $\widehat\Sigma_g$.
As the group mean vectors deviate from one another, $\widehat\Sigma_p$ will inflate while $\widehat\Sigma_g$ will remain more or less the same.

It turns out that we can mimic Euclidean MANOVA statistics using $\widehat\Sigma_p$ and $\widehat\Sigma_g$, without having an explicit residuals term.
For instance, the Pillai-Bartlett trace~\cite{pillai_new_1955} can be expressed as the trace of $H (H+E)$, where $H$ is the between-group sums of squares and cross product matrix, and E is the residual sums of squares and cross product matrix.
Consider starting with the pooled covariance, $\widehat\Sigma_p$, and subtracting the mean within group covariance, $\widehat\Sigma_g$.
This is similar to the idea of subtracting the within group sum of squares from the total sum of squares, leaving only a between group effect.
Therefore we may loosely identify $\widehat\Sigma_p - \widehat\Sigma_g$ with $H$, and similarly loosely identify $\widehat\Sigma_g$ with $E$, ignoring obvious normalizing constants.
Translating the Pillai-Bartlett trace into our context then produces:
\begin{align}
    \Lambda_{pillai} &= \text{tr}[H (H+E)^{-1}] 
        = \text{tr}\left[ \left(\widehat\Sigma_p - \widehat\Sigma_g\right) \widehat\Sigma_p^{-1} \right] \\
        &= \text{tr}(I - \widehat\Sigma_g \widehat\Sigma_p^{-1}) 
        = \sum_i \lambda_i\left(I - \widehat\Sigma_g \widehat\Sigma_p^{-1}\right),
\end{align}
Where $\lambda_i(A)$ denotes the $i^{th}$ eigenvalue of the matrix $A$.
Under the null hypothesis, $\widehat\Sigma_g \widehat\Sigma_p^{-1}$ is close to the identity matrix $I$, and $\Lambda_{pillai}$ will be close to zero.

Another possible adaptation of a classical MANOVA test would be to perform a Euclidean MANOVA on distances between observations and sample means.
That is, for group $j$ and observation $i$, define the vector of distances $d_{\hat{\mu}ji} = \allowbreak ( d(X_{j1i}, \hat{\mu}_{j1}), \ldots, \allowbreak d(X_{jSi}, \hat{\mu}_{jS}) )^T$.
each vector summarizes an observation based on its distance to its respective group sample mean.
We can then apply the Pillai-Bartlett trace directly to the distances to compare the groups.
We call this statistic $\Lambda_{pillai,d}$.
Clearly, if any of the group variances changes, the average distance must be changing as well, which should be detected by $\Lambda_{pillai,d}$.

\subsection{Statistics Based on Riemannian Geometry}

As was mentioned above, as the group mean vectors deviate from one another, $\widehat\Sigma_p$ will inflate while $\widehat\Sigma_g$ will remain more or less the same, so a natural test would be to compare them to one another.

Since $\widehat\Sigma_g$ is the sum of positive-definite matrices, it is positive definite, whereas $\widehat\Sigma_p$ is positive definite by Lemma \ref{lem:cov_mat_psd}.
The space of symmetric positive definite (SPD) matrices is a manifold, so $\hat\Sigma_p$ and $\hat\Sigma_g$ can be thought of as points on the manifold.
This manifold can be endowed with a Riemannian metric which defines the distance between points in any \textit{tangent space} incident to the manifold.
To avoid confusion, note that the Riemannian metric does not measure distances between points on the manifold (i.e. SPD matrices) directly, but the Riemannian metric can be integrated along a path on the manifold to determine the length of the path, and the distance between two points on the manifold can be taken as the length of the shortest path between them.
For more details on manifolds and Riemannian metrics, refer to the Appendix.
The shortest path length between two SPD matrices provides a means to compare $\widehat\Sigma_g$ to $\widehat\Sigma_p$ and ultimately test for differences between the groups.

We define three more statistics statistics which compare the within group variation $\widehat\Sigma_g$ to the total variation $\widehat\Sigma_p$ using different choices of Riemannian metric:  Euclidean (Euc), affine-invariant (AIRM), and log-euclidean (LERM)\cite{pennec_riemannian_2006, arsigny_geometric_2007, you_re-visiting_2021, moakher_differential_2005}:
\begin{align}
    R_{\mu, Euc} &= d_{Euc}\left(\widehat{\Sigma}_p, \widehat{\Sigma}_g\right) = \sqrt{\sum_i \lambda_i^2 \left(\widehat\Sigma_p - \widehat\Sigma_g \right)}\label{eq:EUC_means} \\
    R_{\mu, AIRM} &= d_{AIRM}\left(\widehat{\Sigma}_p, \widehat{\Sigma}_g\right) = \sqrt{\sum_i \text{log}^2 \left[\lambda_i( \widehat\Sigma_p^{-1} \widehat\Sigma_g )\right]} \label{eq:AIRM_means} \\
    R_{\mu, LERM} &= d_{LERM}\left(\widehat{\Sigma}_p, \widehat{\Sigma}_g\right) = \sqrt{\sum_i \lambda_i^2 \left(\text{Log}(\widehat\Sigma_p) - \text{Log}(\widehat\Sigma_g) \right)} \label{eq:LERM_means}
\end{align}
All of these statistics are expected to be small if all groups have the same population mean vector and will increase as the mean vectors deviate from each other.
The $d_{LERM}$ and $d_{AIRM}$ distances are particularly sensitive to changes in determinant, which will certainly change for $\Sigma_p$ as the mean vectors grow further apart, so we anticipate that $R_{\mu,LERM}$ and $R_{\mu,AIRM}$ will be particularly sensitive to departures from the null hypothesis in \eqref{eq:sms_null}.
Later we will integrate each of the statistics from this section into a global test for differences in means or covariance matrices.

The distances $d_{Euc}$, $d_{AIRM}$ and $d_{LERM}$ defined respectively in \eqref{eq:EUC_metric}, \eqref{eq:AIRM_metric}, and \eqref{eq:LERM_metric} can also be used to compare the group covariance matrices in a pairwise fashion, in order to determine if any covariance matrix differs from the others.
Since the sample covariance matrices are each SPD, the use of these Riemannian metric based distances is appropriate.
This is similar in spirit to Dubey \& M\"uller's $U_s$ statistic, which performs pairwise comparisons of group Fr\'echet variances.
Like $U_s$, we consider the weighted average of all pairwise distances between group covariance matrices, where the weights are the product of the proportions of observations in the respective groups:
\begin{align}
    R_{\Sigma, Euc} &= \sum_{j<j'} \gamma_j \gamma_{j'} d_{Euc}\left(\widehat{\Sigma}_j, \widehat{\Sigma}_{j'}\right) &= \sum_{j<j'} \gamma_j \gamma_{j'} \sqrt{\sum_i \lambda_i^2 \left(\widehat\Sigma_j - \widehat\Sigma_{j'} \right)}\label{eq:EUC_vars} \\
    R_{\Sigma, AIRM} &= \sum_{j<j'} \gamma_j \gamma_{j'} d_{AIRM}\left(\widehat{\Sigma}_j, \widehat{\Sigma}_{j'}\right) &= \sum_{j<j'} \gamma_j \gamma_{j'} \sqrt{\sum_i \text{log}^2 \left[\lambda_i( \widehat\Sigma_j^{-1} \widehat\Sigma_{j'} )\right]} \label{eq:AIRM_vars} \\
    R_{\Sigma, LERM} &= \sum_{j<j'} \gamma_j \gamma_{j'} d_{LERM}\left(\widehat{\Sigma}_j, \widehat{\Sigma}_{j'}\right) &= \sum_{j<j'} \gamma_j \gamma_{j'} \sqrt{\sum_i \lambda_i^2 \left(\text{Log}(\widehat\Sigma_j) - \text{Log}(\widehat\Sigma_{j'}) \right)} \label{eq:LERM_vars}
\end{align}
Each of these quantities is expected to be small when the population covariance matrices of all groups are equal, and grow as the covariance matrices differ from each other.
Since each type of distance is rooted in a different underlying geometry, covariance matrices which appear close in one geometry may appear further apart in another geometry, and vice versa.
Therefore we expect the three statistics introduced in this section to have relative strengths and weaknesses for the types of departures from equality of group covariance matrices they can detect well.
We anticipate that the statistics using $d_{AIRM}$ and $d_{LERM}$ will be more sensitive to differences in matrix determinant, while the statistic using $d_{Euc}$ may be better equipped to detect differences between matrices of similar determinants.

Comparing the covariance matrices between groups implicitly compares the metric space to metric space dependence of the random object vectors across groups.
Another way to compare dependence between groups is to compare their centered Fr\'echet correlations.
Define the $(s,s')$ element of the centered sample correlation matrix $\hat{P}_j$ of group $j$ be defined as
\begin{align}
    \hat{P}_{j}(s,s') &= \begin{cases} 1 & s = s' \\ \hat{\rho}_{jss'}' & s \neq s' \end{cases},
\end{align}
where  $\hat{\rho}_{jss'}'$ is the sample centered covariance between metric spaces $s$ and $s'$ in group $j$.
Since the centered Fr\'echet correlation is just the classical Euclidean correlation applied to the distances of the random objects from their means, $\hat{P}_{j}$ is necessarily positive semi-definite, so we can use the same approach to compare correlation matrices as we used to compare covariance matrices above:
\begin{align}
    R_{P, Euc} &= \sum_{j<j'} \gamma_j \gamma_{j'} d_{Euc}\left(\widehat{P}_j, \widehat{P}_{j'}\right) &= \sum_{j<j'} \gamma_j \gamma_{j'} \sqrt{\sum_i \lambda_i^2 \left(\widehat{P}_j - \widehat{P}_{j'} \right)}\label{eq:EUC_cors} \\
    R_{P, AIRM} &= \sum_{j<j'} \gamma_j \gamma_{j'} d_{AIRM}\left(\widehat{P}_j, \widehat{P}_{j'}\right) &= \sum_{j<j'} \gamma_j \gamma_{j'} \sqrt{\sum_i \text{log}^2 \left[\lambda_i( \widehat{P}_j^{-1} \widehat{P}_{j'} )\right]} \label{eq:AIRM_cors} \\
    R_{P, LERM} &= \sum_{j<j'} \gamma_j \gamma_{j'} d_{LERM}\left(\widehat{P}_j, \widehat{P}_{j'}\right) &= \sum_{j<j'} \gamma_j \gamma_{j'} \sqrt{\sum_i \lambda_i^2 \left(\text{Log}(\widehat{P}_j) - \text{Log}(\widehat{P}_{j'}) \right)} \label{eq:LERM_cors}
\end{align}
It is possible that comparing correlations directly will be more sensitive to changes in dependence than comparing covariance matrices, so the correlation based statistics are worth consideration.

\subsection{Statistics Based on the Fr\'echet ANOVA}

Just as Dubey and M\"uller defined the $T_s$ statistic to compare group means and variances, we may analogously define the $T_{ss'}$ statistic as
\begin{align}
    T_{ss'} = \frac{nU_{ss'}}{\sum_{j=1}^J \frac{\gamma_j}{\widehat{\text{Var}}(\widehat{\sigma}_{jss'})}} + \frac{nF_{ss'}^2}{\sum_{j=1}^J \gamma_j^2 \widehat{\text{Var}}(\widehat{\sigma}_{jss'})}\label{eq:frechet_anova_cov},
\end{align}
where 
\begin{align}
    F_{ss'} &= \widehat{\sigma}_{pss'} - \sum_{j=1}^J \gamma_j \widehat{\sigma}_{jss'} \\
    U_{ss'} &= \sum_{j < j'} \frac{\gamma_j \gamma_{j'}}{\widehat{\text{Var}}(\widehat{\sigma}_{jss'}) \widehat{\text{Var}}(\widehat{\sigma}_{j'ss'})} \left( \widehat{\sigma}_{jss'} - \widehat{\sigma}_{j'ss'} \right)^2,
\end{align}
Unlike $T_s$, we do not have asymptotic results for $T_{ss'}$, though if the Fr\'echet covariance is indeed asymptotically normal, then under the null hypothesis $T_{ss'}$ would be asymptotically $\chi_{J-1}^2$ as $n\rightarrow \infty$, the proof being identical to the proof of Theorem 2 in \cite{dubey_frechet_2019}.
Based on simulation studies, we suspect asymptotic normality of the Fr\'echet covariance may hold at least in some circumstances, suggesting we may be justified in comparing $T_{ss'}$ to a $\chi_{J-1}^2$ distribution to test the null hypothesis.
We explore the validity of this approach in a few example scenarios below.

In the preceding three sections we introduced several statistics, two inspired by the classical MANOVA test, three comparing pooled and group mean covariance matrices, three comparing group covariance matrices, three comparing group centered correlation matrices, and one analogous to the statistic involved in the Fr\'echet ANOVA.
In the next section we will employ these statistics in the construction of tests for the null hypothesis in \eqref{eq:sms_null}.

\subsection{Testing the Null Hypothesis}

Broadly speaking, we will consider three main approaches to testing the null hypothesis in \eqref{eq:sms_null}.
The first approach will combine the Riemannian mean, covariance matrix, and correlation matrix based statistics from the previous two sections into a global permutation-based test, achieving the desired Type I error rate using a Bonferroni correction.
Doing this for each Riemannian metric gives three global tests.
The second approach will combine several Fr\'echet ANOVA test statistics, each of which applies to just one or two metric spaces, into a single test, also using a Bonferroni correction to account for multiple tests.
Using the $T_s$ and $T_{ss'}$ statistics, we will consider a test which compares against the $\chi_{J-1}^2$ distribution, as well as a permutation test.
The third approach will apply both extensions of the Pillai-Bartlett trace statistic discussed above, resulting in two tests, one based on permutations and one based on an approximation using the F distribution. Each test will be discussed in more detail below.

In the absence of any distributional assumptions on the underlying metric space probability measures, we lack distributions for the Riemannian statistics under the null hypothesis.
The null hypothesis only assumes equality of the first and second moments between the groups, however if we make the stronger assumption that all moments are equal, then observations are exchangable between groups, and we may generate an approximate sampling distribution for the statistics using either label permutation or bootstrap.
Permutation tests have recently been employed to compare groups of persistence diagrams~\cite{berry_functional_2020}, another non-euclidean setting, and Dubey \& M\"uller~\cite{dubey_frechet_2019} found in simulation studies that a bootstrapped version of their $T_s$ statistic showed improved power over the version comparing against a $\chi_{J-1}^2$ distribution.

We create three global tests, one for each Riemannian-based metric, each comprising of a mean test, a covariance matrix test, and a correlation matrix test, then apply a Bonferroni correction by rejecting if any statistic is greater than the $1-\alpha/3$ upper-quantile of its permutation distribution.
Explicitly, the three tests are:
\begin{enumerate}
    \item $\bm{R_{Euc}}$: reject if the permutation test for any of $R_{\mu, Euc}$, $R_{\Sigma, Euc}$, $R_{P, Euc}$ rejects at the $\alpha/3$ level
    \item $\bm{R_{AIRM}}$: reject if the permutation test for any of $R_{\mu, AIRM}$, $R_{\Sigma, AIRM}$, $R_{P, AIRM}$ rejects at the $\alpha/3$ level
    \item $\bm{R_{LERM}}$: reject if the permutation test for any of $R_{\mu, LERM}$, $R_{\Sigma, LERM}$, $R_{P, LERM}$ rejects at the $\alpha/3$ level
\end{enumerate}

Another way to test the null hypothesis is by applying the Fr\'echet ANOVA in \eqref{eq:frechet_anova} to each of the S metric spaces separately, and also apply the analogous ANOVA for covariances in \eqref{eq:frechet_anova_cov} for each distinct pairing of metric spaces, performing $S(S+1)/2$ tests in total.
Again we can use a Bonferroni correction to construct a single global test, $\bm{T_{FA}}$, which we reject if any of the $S$ ANOVA tests or any of the $S(S-1)/2$ ANOVA for covariance tests reject at the $2\alpha/(S(S+1))$ significance level.
We compare each statistic against the $1-2\alpha/(S(S+1))$ upper quantile of a $\chi_{J-1}^2$ distribution.
We also consider permutation based tests, denoted $\bm{T_{FA, perm}}$, where each statistic is compared against the $1-2\alpha/(S(S+1))$ upper quantile of the permutation distribution, to investigate if it produces higher power at lower sample sizes as was observed in \cite{dubey_frechet_2019}.

Finally, we construct tests using each version of the Pillai-Bartlett trace, $\Lambda_{pillai}$ and $\Lambda_{pillai,d}$ to examine a more straightforward application of existing MANOVA methods.
Again, we lack distributional results for $\Lambda_{pillai}$, so we resort to a permutation test comparing $\Lambda_{pillai}$ to the upper $1-\alpha$ quantile of its permutation distribution.
On the other hand, $\Lambda_{pillai,d}$ is just the Euclidean Pillai-Bartlett trace, which typically assumes normality and homogeneity of variance.
The statistic is usually transformed and compared to an $F$ distribution, and tends to be robust against departure from normality assumptions, see \cite{anderson_introduction_1984}  or ~\cite{krzanowski_multivariate_1994} for more details.
We'll denote these tests as $\bm{\Lambda_{pillai}}$ and $\bm{\Lambda_{pillai,d}}$ respectively.

In total, we have seven tests reflecting three broad approaches to comparing two or more groups in multiple metric spaces:
$\bm{R_{Euc}}$, Riemannian permutation test based on the Euclidean Riemannian metric; $\bm{R_{AIRM}}$, Riemannian permutation test based on the affine invariant Riemannian metric; $\bm{R_{LERM}}$, Riemannian permutation test based on the log-Euclidean Riemannian metric; $\bm{T_{FA}}$, asymptotic test based on the Fr\'echet ANOVA (recall that asymptotic results for the Fr\'echet covariance remain to be proven); $\bm{T_{FA,perm}}$, permutation test based on the Fr\'echet ANOVA; $\bm{\Lambda_{pillai}}$, permutation test adapted from the Pillai-Bartlett trace; $\bm{\Lambda_{pillai,d}}$, classical Pillai-Bartlett trace applied to the distances of observations to their respective group means.
In the next section, we will evaluate these statistics under various departures from the null hypothesis in two separate scenarios.

\section{Simulation Studies}

To compare the power and Type I error rate of the tests described in the previous section, we simulate data under the null hypothesis and under various departures from the null hypothesis for two different scenarios, performing each test on each sample.
All permutation-based tests permuted group labels 500 times to generate a null distribution, and 6,000 simulated data sets were used to estimate the power and Type I error rate.
In both scenarios, we use a desired Type I error rate of $\alpha = 0.05$.

\subsection{Scenario 1: Normal Distributions in Two Metric Spaces, Two Groups}
In the first scenario, we compare $J=2$ groups in $S=2$ metric spaces, $\Omega_1$ and $\Omega_2$.
Random objects $X_1 \in \Omega_1$ are normal distributions with random mean $a$ and unit variance, and random objects $X_2 \in \Omega_2$ are normal distributions with random mean $b$ and unit variance.
Both $\Omega_1$ and $\Omega_2$ are equipped with the Wasserstein-2 metric which reduces to the $L_2$ Euclidean distance between the means of the probability distributions. 
The Fr\'echet mean of $X_1$ is a $N(E(a), 1)$ distribution, and the mean of $X_2$ is a $N(E(b), 1)$ distribution.
The Fr\'echet variance of $X_1$ is simply $var(a)$, and the Fr\'echet variance of $X_2$ is $var(b)$.
For this scenario, we performed five simulation studies to examine different ways the data might depart from the null hypothesis, summarized in Table \ref{tab:scen1_studies}.
In each study, both groups have sample size $n = 100$.

The first study investigates a change in the Fr\'echet mean of $X_1$ for group 2.
That is, $a\sim N(0, 0.5^2)$ and $b\sim N(0,0.2^2)$ for group 1, and $a\sim N(\delta, 0.5^2)$ and $b\sim N(0,0.2^2)$ for group 2, where we let $\delta$ range between $-1$ and $1$.
Note that $0$ corresponds to the null hypothesis, and $a$ and $b$ are independent.
\begin{figure}[h!]
    \centering
    \begin{subfigure}{0.48\textwidth}
        \centering
        \includegraphics[width=\textwidth]{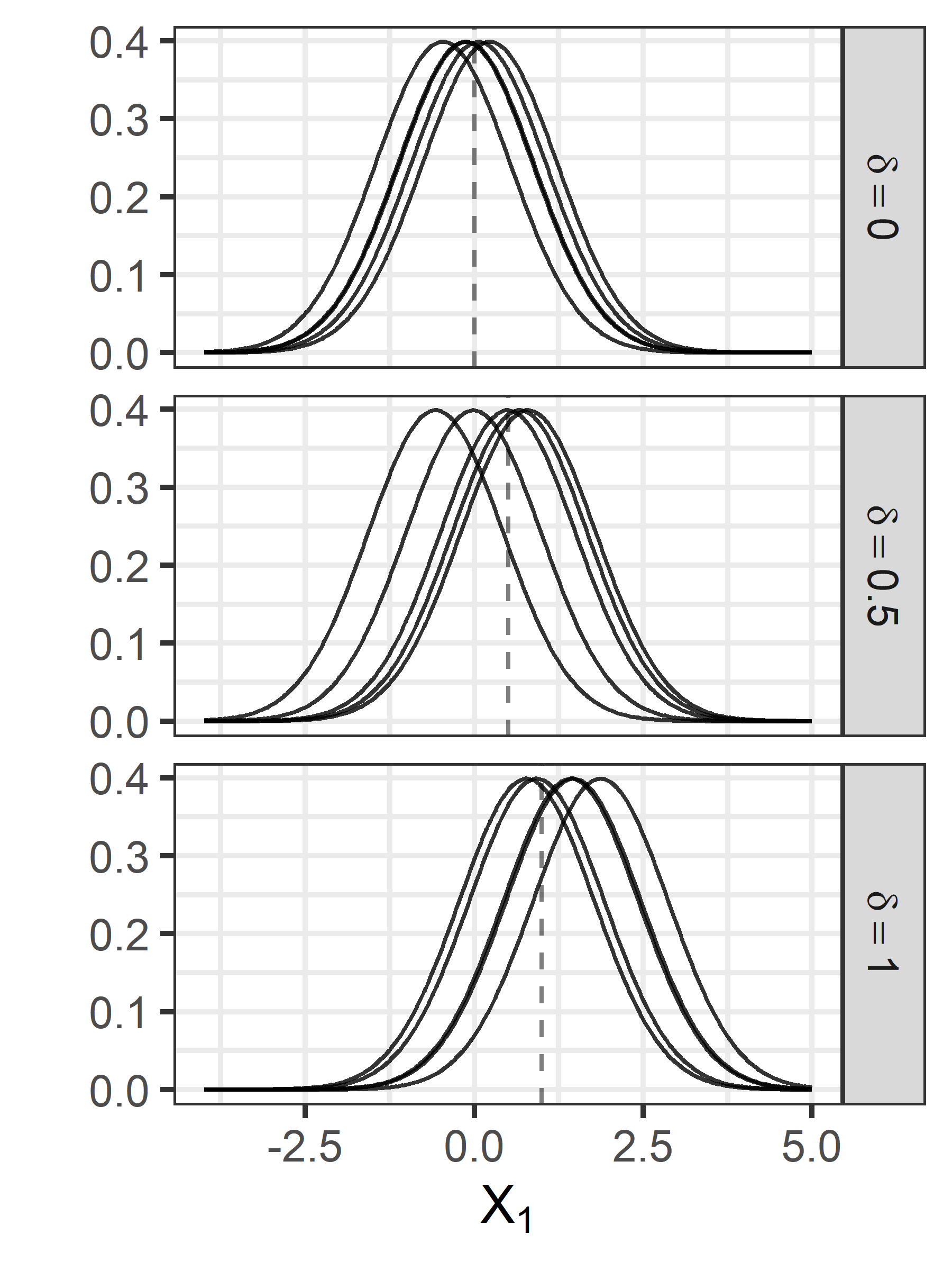}
        \caption{Five realizations of $X_1$ for group 2 for simulation scenario 1, study 1, for three values of $\delta$, which is the mean of $a$.}
        \label{fig:sms_scen1_data_a}
    \end{subfigure}\hfill
    \begin{subfigure}{0.48\textwidth}
        \centering
        \includegraphics[width=\textwidth]{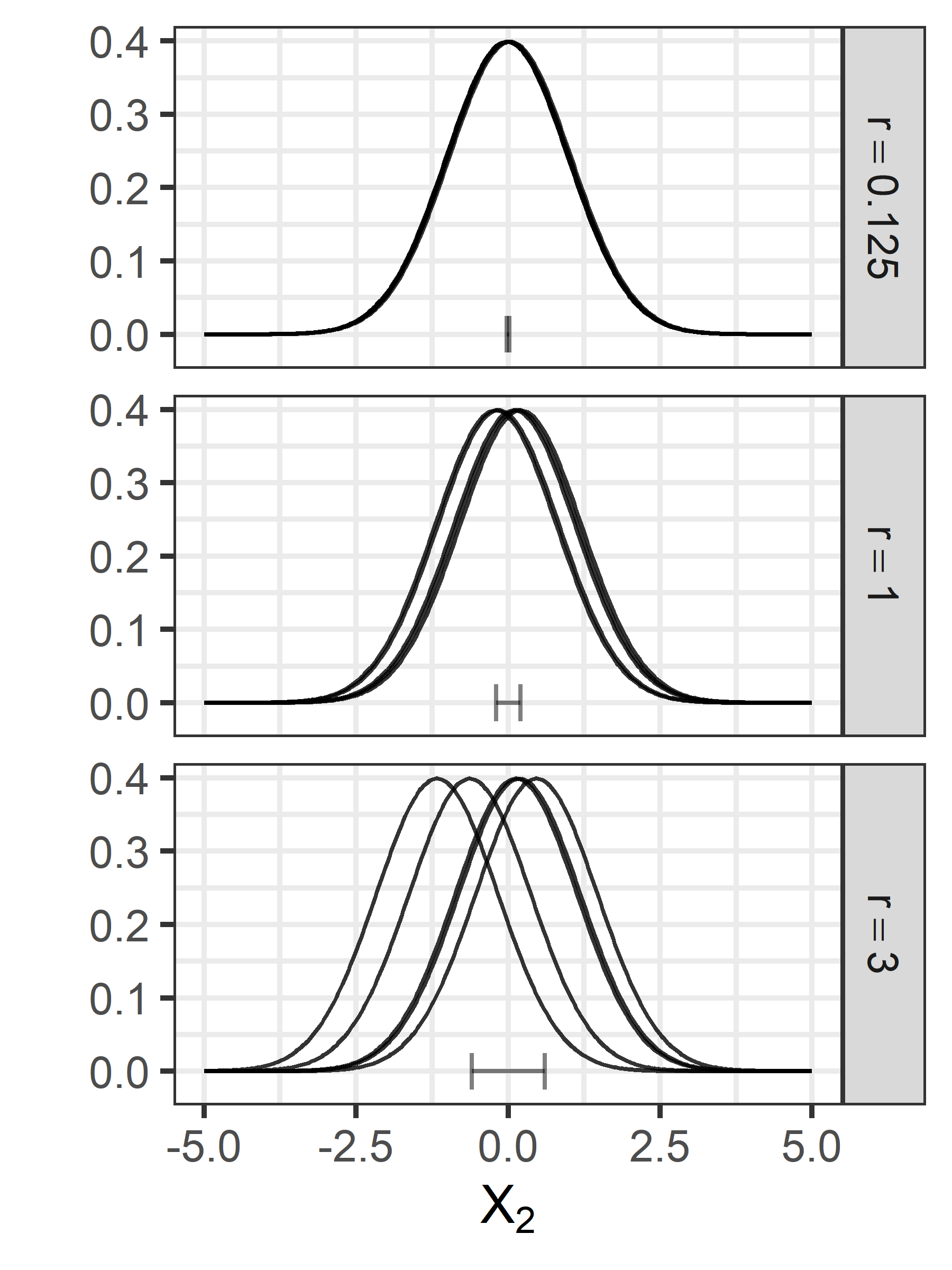}
        \caption{Five realizations of $X_1$ for group 2 in simulation scenario 1, study 2, for three values of $r$. The length of the whiskers in each plot shows the standard deviation of $b$, which is $0.2r$.}
        \label{fig:sms_scen1_data_b}
    \end{subfigure}
    \begin{subfigure}{1.0\textwidth}
        \centering
        \includegraphics[width=\textwidth]{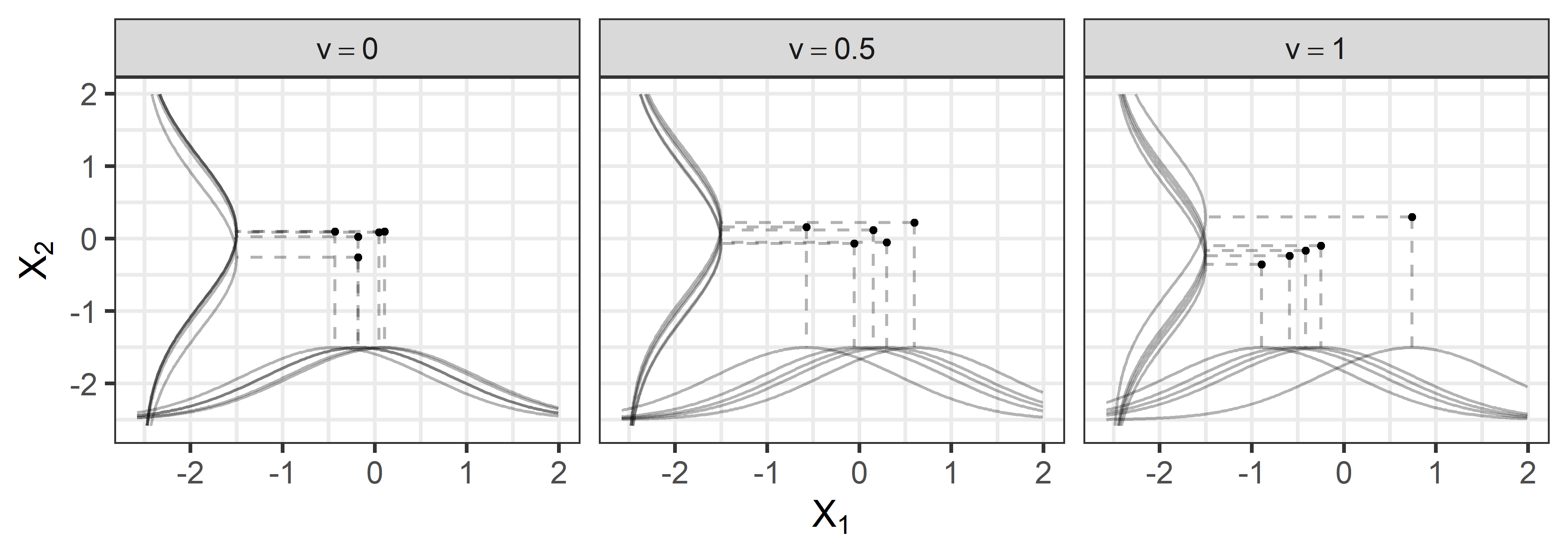}
        \caption{Five realizations of $(X_1,X_2)$ for group 2 in simulation scenario 1, study 3, for three values of $v = \text{cor}(a,b)$. $X_1$ and $X_2$ are illustrated in the margins, and the points correspond to $(a,b)$. Dashed lines connect each instance of $(X_1,X_2)$ to its corresponding point $(a,b)$.}
        \label{fig:sms_scen1_data_c}
    \end{subfigure}
    \caption{Data examples for the first four studies in scenario 1}
    \label{fig:sms_scen1_data}
\end{figure}
Figure \ref{fig:sms_scen1_data_a} shows five realizations of $X_1$ for group 2, under different values of $\delta$.

The second study investigates a change in the variance of $X_2$ for group 2, such that $b\sim N(0, (0.2r)^2)$ with $r$ ranging between 0.125 and 3, with $r=1$ corresponding to the null hypothesis.
Figure \ref{fig:sms_scen1_data_b} shows five realizations of $X_2$ for group 2, under different values of $r$.
The distribution of group 1 is the same as in study 1, and again $a$ and $b$ are independent.

The third and fourth studies investigate changes in the dependence between $X_1$ and $X_2$, where dependence is induced by giving $a$ and $b$ a non-zero covariance.
For study 3, $a$ and $b$ for group 1 remain independent, while $\text{cor}(a, b) = v$ for group 2, with $v$ ranging between 0 to 1, and $v=0$ corresponding to the null hypothesis.
Note that $a$ and $b$ are real numbers, so $v$ is the classic (Euclidean) correlation.
For study 4, we let $a$ and $b$ for group 1 have some dependence (i.e. $\text{cor}(a, b) = \sqrt{0.5}$ ), while we let $\text{cor}(a, b) = v$ range between $0$ and $1$ for group 2.
Figure \ref{fig:sms_scen1_data_c} shows five realizations of $(X_1,X_2)$ in group 2 under different values of $v$, where $X_1$ and $X_2$ are illustrated in the margins, and the points correspond to $(a, b)$ for each realization.

The fifth study investigates multiple simultaneous departures from the null hypothesis.
Here $a\sim N(\delta, 0.5^2)$ and $b\sim N(0, (0.2r)^2)$, and $\text{cor}(a, b) = v$ for group 2, whereas in group 1, $a$ and $b$ have the same distributions as in study 1.
We allow $\delta$ to range between $0$ to $1$, $r$ range from $1$ to $3$, and $v$ range from $0$ to $1$ all simultaneously.
Specifically, we let $(\delta, r, v) = (1-\Delta)\times(0,1,0) +  \Delta\times(1,3,1)$, with $\Delta \in [0,1]$ indicating the overall ``effect size''.

\begin{table}[h!]
    \centering
    \begin{tabular}{l|c|c|c|c}
        \textbf{Study / Group} & $\bm{a}$ & $\bm{b}$ & $\bm{a}$ \textbf{vs.} $\bm{b}$ & \textbf{Parameter Range} \\
        \hline
        Study 1: Means & & & & \\
        \quad - Group 1 & $\bm{N(0, 0.5^2)}$ & $N(0,0.2^2)$ & Independent & \multirow{2}{*}{$-1 \leq \delta \leq 1$} \\
        \quad - Group 2 & $\bm{N(\delta, 0.5^2)}$ & $N(0,0.2^2)$ & Independent & \\
        Study 2: Variances & & & & \\
        \quad - Group 1 & $N(0, 0.5^2)$ & $\bm{N(0,0.2^2)}$ & Independent & \multirow{2}{*}{$0.125 \leq r \leq 3$} \\
        \quad - Group 2 & $N(0, 0.5^2)$ & $\bm{N(0,(0.2r)^2)}$ & Independent & \\
        Study 3: Covariances & & & & \\
        \quad - Group 1 & $N(0, 0.5^2)$ & $N(0,0.2^2)$ & \bf{Independent} & \multirow{2}{*}{$0 \leq v \leq 0.9$} \\
        \quad - Group 2 & $N(0, 0.5^2)$ & $N(0,0.2^2)$ & $\bm{\text{cor}(a, b) = v}$ & \\
        Study 4: Covariances & & & & \\
        \quad - Group 1 & $N(0, 0.5^2)$ & $N(0,0.2^2)$ & $\bm{\text{cor}(a, b) = \sqrt{0.5}}$  & \multirow{2}{*}{$0 \leq v \leq 0.9$} \\
        \quad - Group 2 & $N(0, 0.5^2)$ & $N(0,0.2^2)$ & $\bm{\text{cor}(a, b) = v}$ & \\
        Study 5: Composite & & & & \\
        \quad - Group 1 & $\bm{N(0, 0.5^2)}$ & $\bm{N(0,0.2^2)}$ & \bf{Independent} & \multirow{3}{*}{\makecell{$0 \leq \delta \leq 1$ \\ $1 \leq r \leq 3$ \\  $0 \leq v \leq 0.9$} } \\
        \quad - Group 2 & $\bm{N(\delta, 0.5^2)}$ & $\bm{N(0,(0.2r)^2)}$ & $\bm{\text{cor}(a, b) = v}$ & \\
        & & & & 
    \end{tabular}
    \caption{Summary of simulation parameters for groups 1 and 2 in each study of scenario 1. Bold sections highlight the differences between groups in each study.}
    \label{tab:scen1_studies}
\end{table}
\begin{figure}[h!]
    \centering
    \begin{subfigure}[t]{0.4\textwidth}
        \centering
        \includegraphics[width=\textwidth]{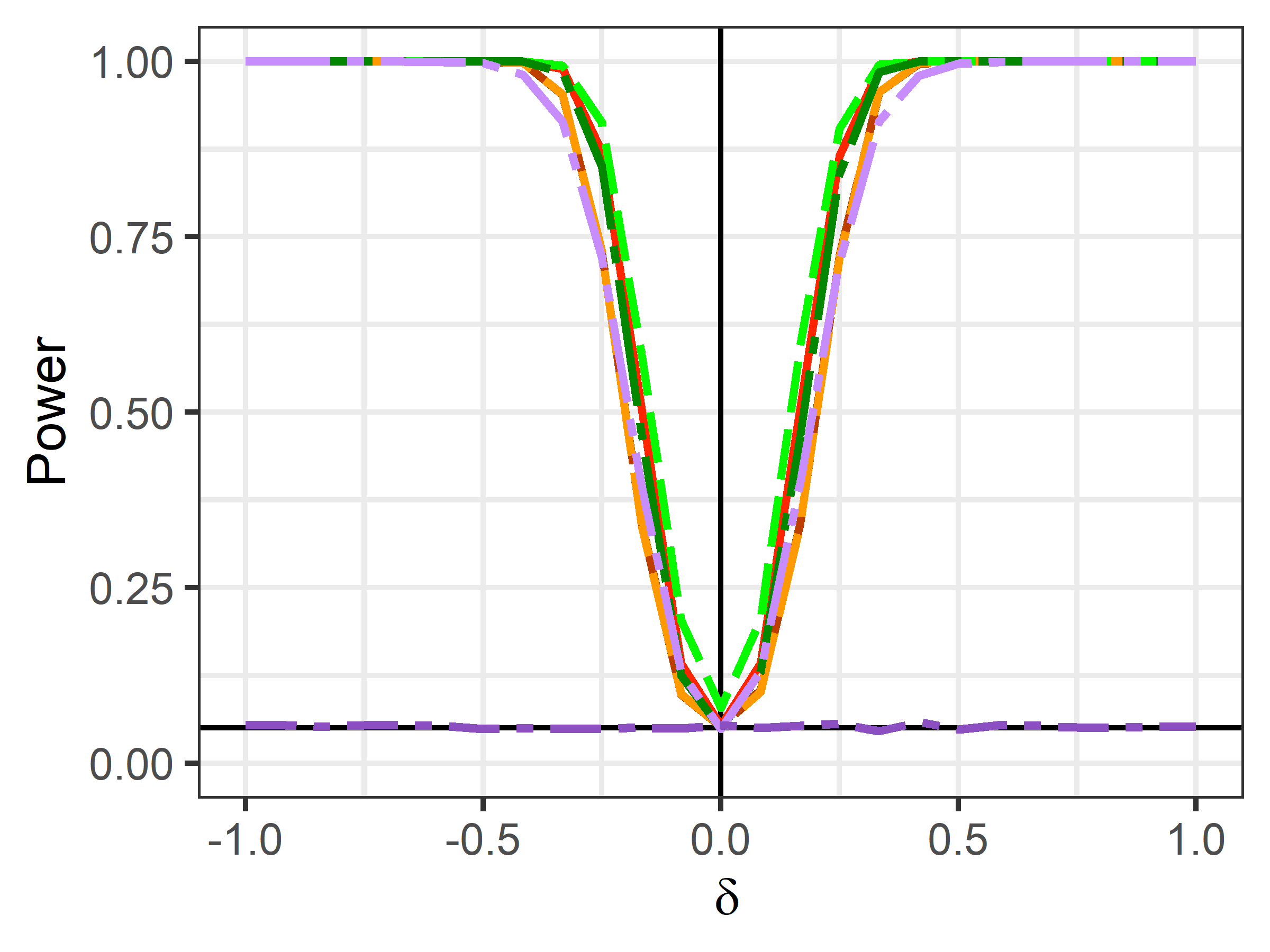}
        \caption{Study 1: Means}
    \end{subfigure}%
    \begin{subfigure}[t]{0.4\textwidth}
        \centering
        \includegraphics[width=\textwidth]{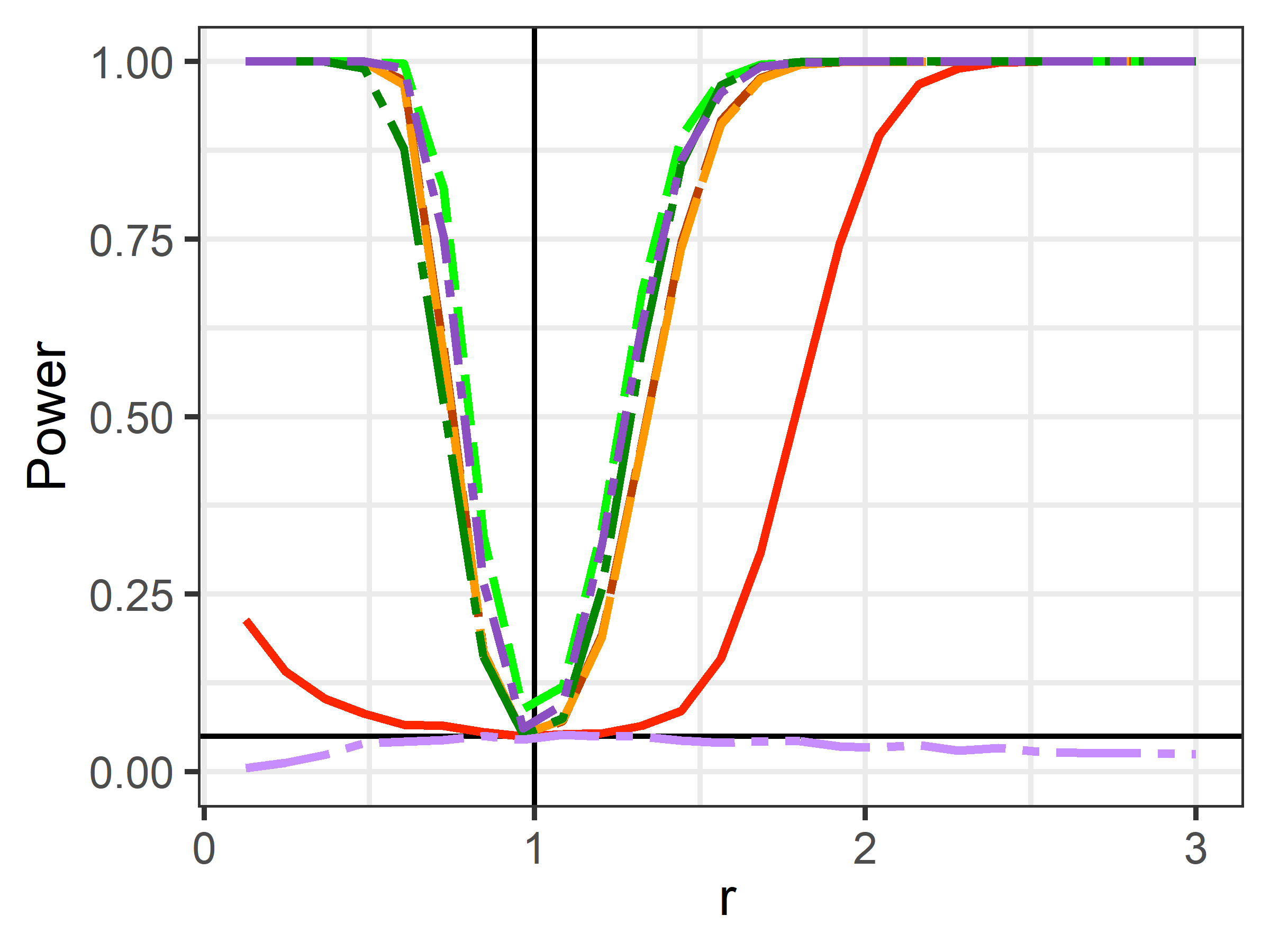}
        \caption{Study 2: Variances}
    \end{subfigure}
    \begin{subfigure}[t]{0.4\textwidth}
        \centering
        \includegraphics[width=\textwidth]{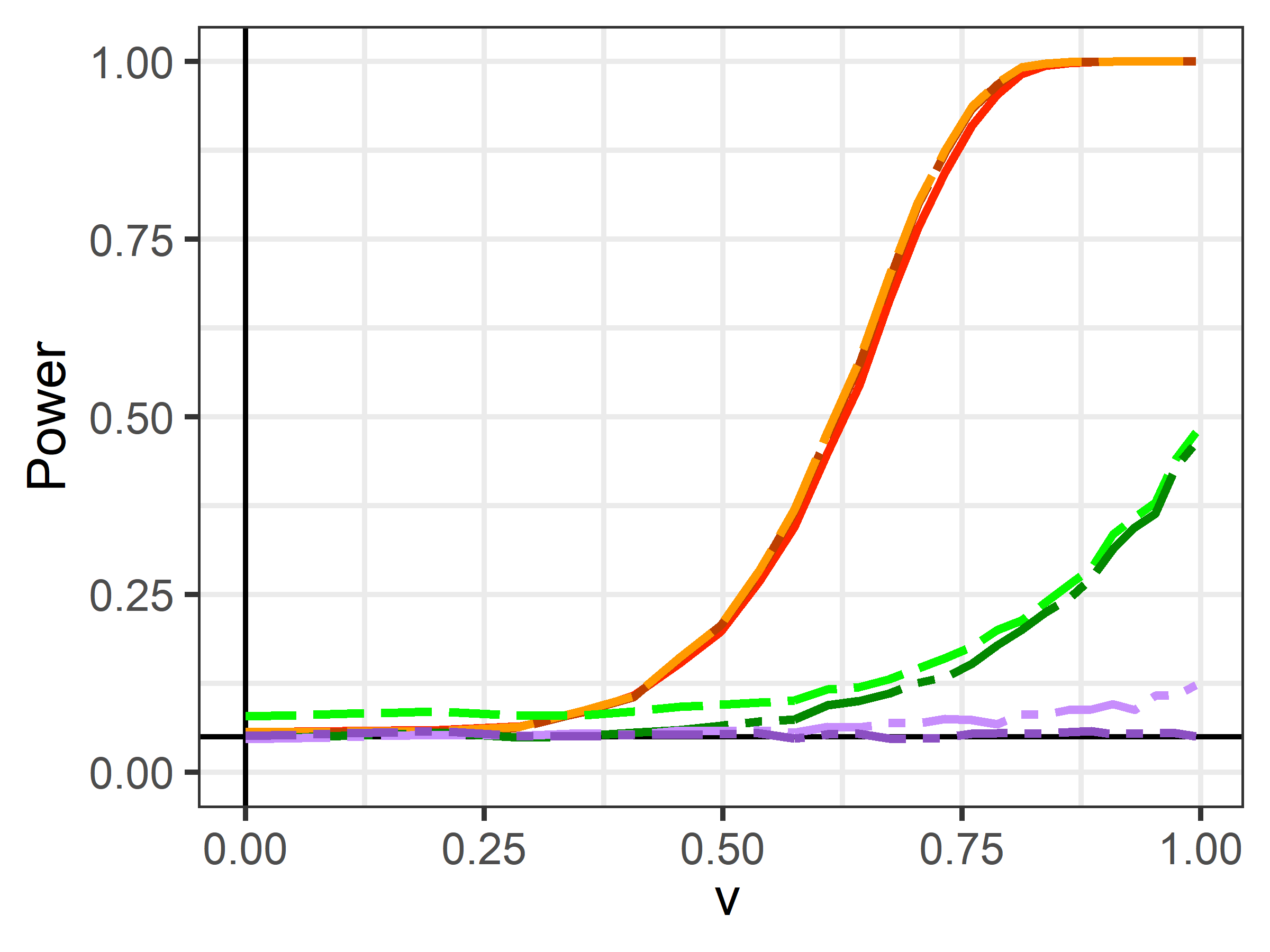}
        \caption{Study 3: Covariances}
    \end{subfigure}%
    \begin{subfigure}[t]{0.4\textwidth}
        \centering
        \includegraphics[width=\textwidth]{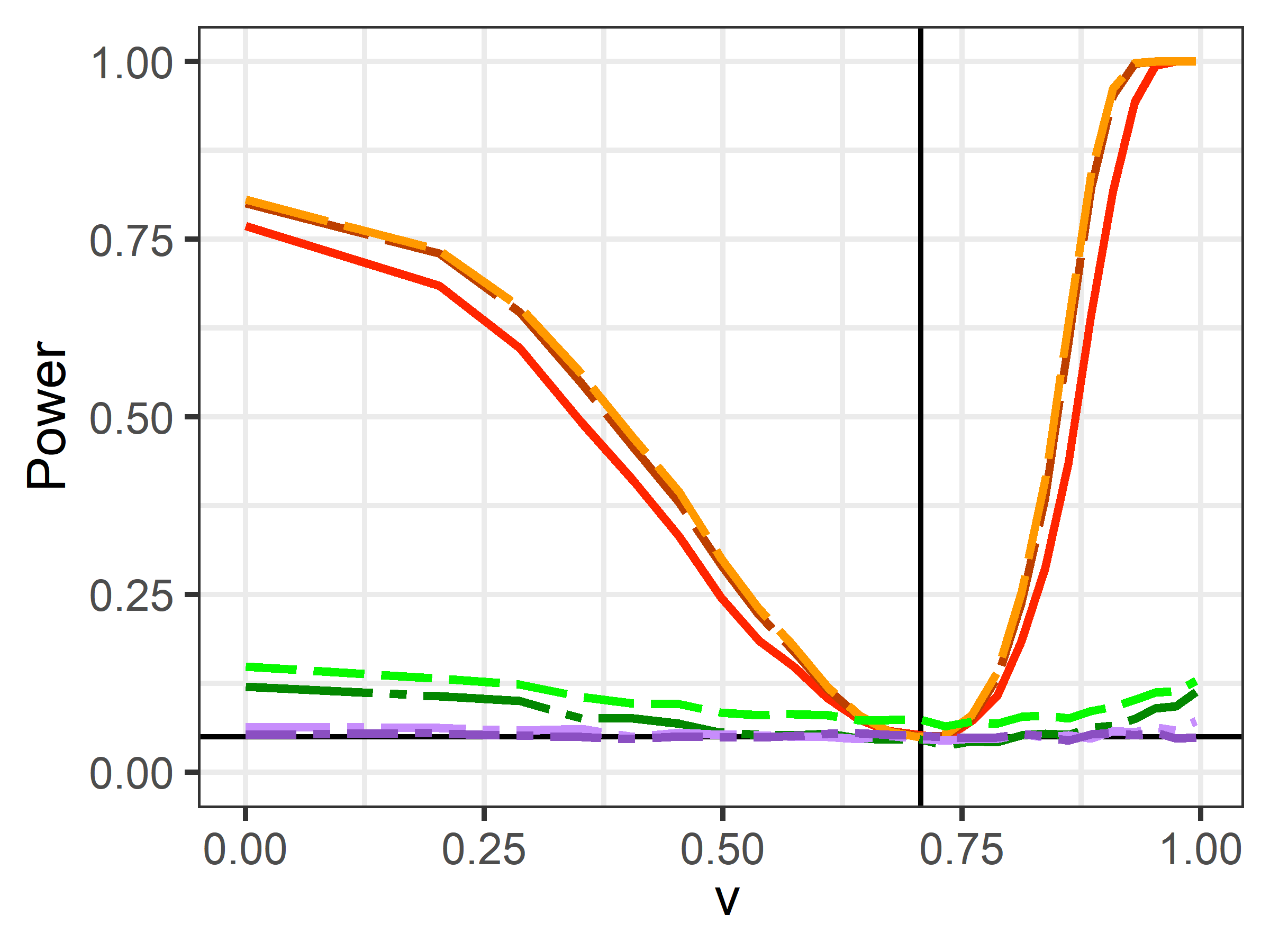}
        \caption{Study 4: Covariances}
    \end{subfigure}
    \begin{subfigure}[t]{0.4\textwidth}
        \centering
        \includegraphics[width=\textwidth]{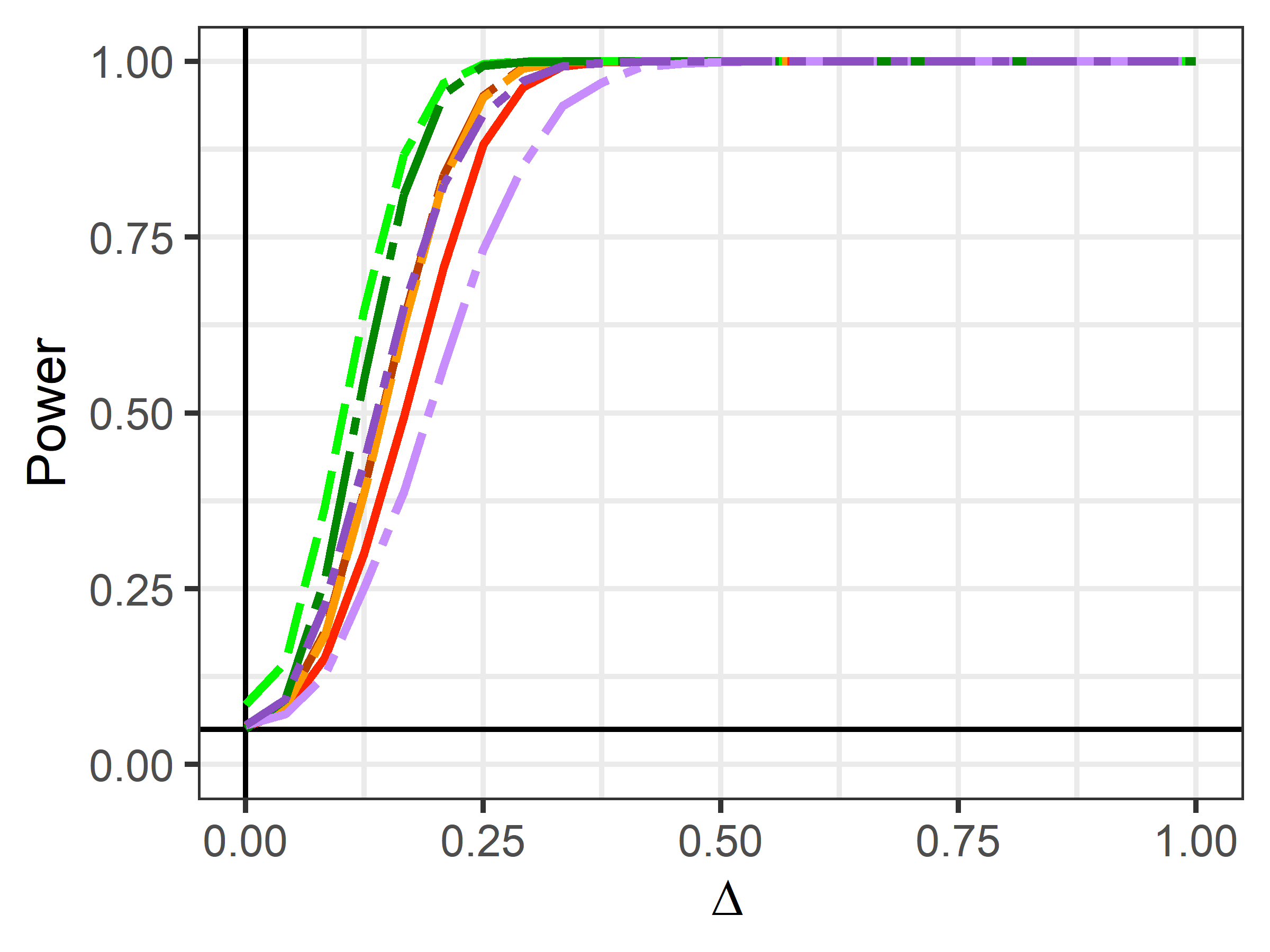}
        \caption{Study 5: Composite}
    \end{subfigure}%
    \begin{subfigure}[t]{0.4\textwidth}
        \centering
        \includegraphics[width=0.50\textwidth]{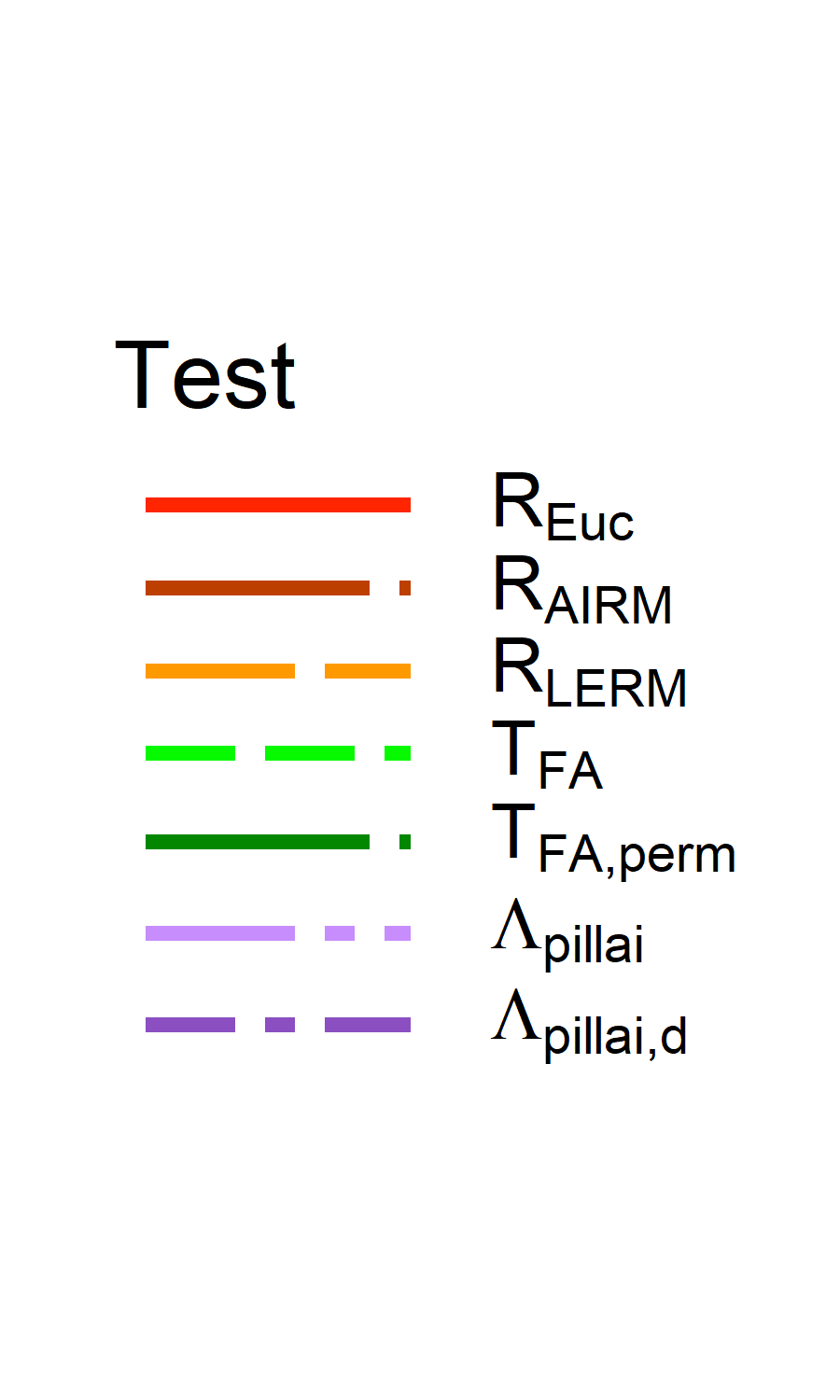}
    \end{subfigure}%
    \caption{Results from each simulation study in scenario 1. In each plot, a vertical black line indicates the null hypothesis, and a horizontal black line indicates the desired Type I error rate, $\alpha=0.05$.}
    \label{fig:sms_scen1}
\end{figure}

The simulation parameters for scenario 1 are summarized in Table \ref{tab:scen1_studies} and the results of the simulation studies are shown in Figure \ref{fig:sms_scen1}.
When the group means differ in study 1, all tests except $\bm{\Lambda_{pillai,d}}$ show similar ability to reject $H_0$ at varying effect sizes. 
Since $\bm{\Lambda_{pillai,d}}$ only works with distances, it cannot pick up on a shift in the mean of $X_2$, so it rejects with rate $\alpha=0.05$ at all effect sizes.

When changing group variances in study 2, however, $\bm{\Lambda_{pillai,d}}$ is able to detect the difference, while $\bm{\Lambda_{pillai}}$ is not.
Recall the latter test, being reconfigured from the Euclidean Pillai-Bartlett MANOVA test, is only designed to detect differences in means.
Both the Riemannian ($\bm{R_{Euc}}$, $\bm{R_{AIRM}}$, $\bm{R_{LERM}}$) and Fr\'echet ANOVA based tests ($\bm{T_{FA}}$ and $\bm{T_{FA,perm}}$) reject under departure of the null as well, though the Riemannian test using the underlying Euclidean Metric requires a much more drastic group difference to detect the difference.
This shows the utility of the LERM and AIRM Riemannian geometries when comparing PSD matrices in certain scenarios.

For studies 3 and 4, the Riemannian tests show the greatest sensitivity to changes in dependence.
Through additional simulations (not shown), we identified that the statistics which compare correlation matrices ($R_{P,Euc}$, $R_{P,AIRM}$, $R_{P,LERM}$) are the the most sensitive to the differences in dependence in these studies.
Omitting the correlation statistics from the Riemannian metric based tests drastically decreases their power against this alternative.
The Fr\'echet ANOVA based tests ($\bm{T_{FA}}$, $\bm{T_{FA,perm}}$) include a statistic testing for changes in covariance, $T_{12}$, resulting in power against the alternatives in these studies, although substantially lower than that of the Riemannian tests.
The Pillai-Bartlett tests ($\bm{\Lambda_{pillai,d}}$, $\bm{\Lambda_{pillai}}$) show that adapting classical MANOVA procedures may not prove useful in some instances, with $\bm{\Lambda_{pillai}}$ showing low power even at the largest effect size of $v=1$ in scenario 3, and $\bm{\Lambda_{pillai,d}}$ rejecting close to $\alpha=0.05$.

Though studies 1-4 show that the tests considered can behave very differently under different types of departures from the null hypothesis, they perform similarly when groups 1 and 2 differ in several ways simultaneously as in study 5.
Presumably each test is leaning on its own strengths in this case, and the similar power curves are a function of the relative magnitude of change in mean, variance, and covariance affected by the chosen ranges of $\delta$, $r$, and $v$ respectively.

Across all studies, the non-Euclidean Riemannian metric based tests ($\bm{R_{LERM}}$, $\bm{R_{AIRM}}$) perform the most consistently well, being on par or superior to the other tests examined.

\subsection{Scenario 2: Random Networks with Node Covariates, Two Groups}

In the second scenario, we consider random networks with undirected, simple edges (no loops or multi-edges) with ten nodes each and a real valued covariate on each node.
Our interest is in detecting a difference between the groups with respect to network topology and/or node covariates, or some change in the relationship between topology and covariates.
Therefore we identify two metric spaces, one which captures network topology and one which captures the node covariates.
Let $\Omega_1$ consist of graph Laplacians equipped with the Frobenius metric, and $\Omega_2$ consist of vectors in $\mathbb{R}^{10}$ equipped with the standard Euclidean metric, so that each network can be represented by a random object vector $(X_1, X_2) \in \Omega_1 \times \Omega_2$.

\begin{table}[h]
    \centering
    \begin{tabular}{l|c|c}
        \textbf{Study / Group} & $\bm{\gamma}$ & $\bm{\nu}$ \\
        \hline
        Study 1: Power Law & & \\
        \quad - Group 1 & $\bm{\in (2, 3)}$ & $1$\\
        \quad - Group 2 & $\bm{2.5}$ & $1$ \\
        Study 2: Power Law & & \\
        \quad - Group 1 & $\bm{\in (-1, 2)}$ & $1$\\
        \quad - Group 2 & $\bm{1}$ & $1$ \\
        Study 3: Dependence & & \\
        \quad - Group 1 & $2.5$ & $\bm{1}$ \\
        \quad - Group 2 & $2.5$ & $\bm{\in (0.125, 3)}$\\
        Study 4: Composite & & \\
        \quad - Group 1 & $\bm{\in (2.5, 3)}$ & $\bm{1}$ \\
        \quad - Group 2 & $\bm{2.5}$ & $\bm{\in (1, 3)}$ \\
    \end{tabular}
    \caption{Summary of simulation parameters for Groups 1 and 2 in each study of scenario 2. Bold sections highlight the differences between groups in each study.}
    \label{tab:scen2_studies}
\end{table}

We generate observations of $(X_1, X_2)$ by first sampling $X_1$, the network topology, then sampling $X_2$, the node covariates, conditional on $X_1$.
We sample random networks using the Barabasi-Albert growth-plus-preferential-attachment mechanism~\cite{barabasi_emergence_1999} in which each network starts with a single node, then nodes are added one at a time, each connecting ("attaching") to a single existing node at random.
The probability of attaching to an existing node is proportional to $k^\gamma$ with $\gamma \in \mathbb{R}$, where $k$ is the current number of edges currently incident to that node, its \emph{degree}.
Note that a node's degree $k$ can change as the network is constructed, so we let $k_f$ denote a node's final degree after the network is fully sampled.
Values of $\gamma$ closer to $0$ approach more uniform degree distributions while values further from $0$ tend to produce more skewed degree distributions.
Each node covariate is generated according to a Gamma distribution with mean $k_f$, and variance $\nu$.
Because network topology and node covariates both depend on node degree $k_f$, there is an inherent dependence between $X_1$ and $X_2$, and the strength of that dependence depends on the magnitudes of $\gamma$ and $\nu$: larger magnitudes of $\gamma$ and smaller values of $\nu$ correspond with stronger dependence between $X_1$ and $X_2$.
We consider four studies, summarized in Table \ref{tab:scen2_studies}, and for each, both groups have sample size $n = 100$.

For the first study, $\gamma$ is fixed at  $2.5$ for group 2 while it varies between $2$ and $3$ for group 1.
Both groups have $\nu = 1$.
The second study is identical to study 1, except that $\gamma$ is fixed at $1$ for group 2 while it varies between $-1$ and $2$ for group 1.
Changing $\gamma$ will affect the mean and variance of $X_1$ (network topology) as well as the mean of $X_2$ (node covariates), leading to a multifaceted departure from the null hypothesis.

In the third study, $\gamma = 2.5$ for both groups, but $\nu$ is fixed at $1$ for group 1 and varies between $0.125$ and $3$ for group 2.
Higher values of $\nu$ lead to weaker dependence between $X_1$ and $X_2$.
In the fourth study both groups vary at the same time, but in different ways.
For group 1, $\gamma$ varies between $2.5$ and $3$ with $\nu$ fixed at $1$, while for group 2, $\nu$ varies between $1$ and $3$ simultaneously with $\gamma$ fixed at $2.5$.
Specifically, $(\gamma_\text{group 1}, \nu_\text{group 2}) = (1-\Delta)\times (2.5, 1) + \Delta (3, 3)$, with $\Delta \in [0,1]$ indicating the overall ``effect size''.
\begin{figure}
    \centering
    \begin{subfigure}[t]{0.4\textwidth}
        \centering
        \includegraphics[width=\textwidth]{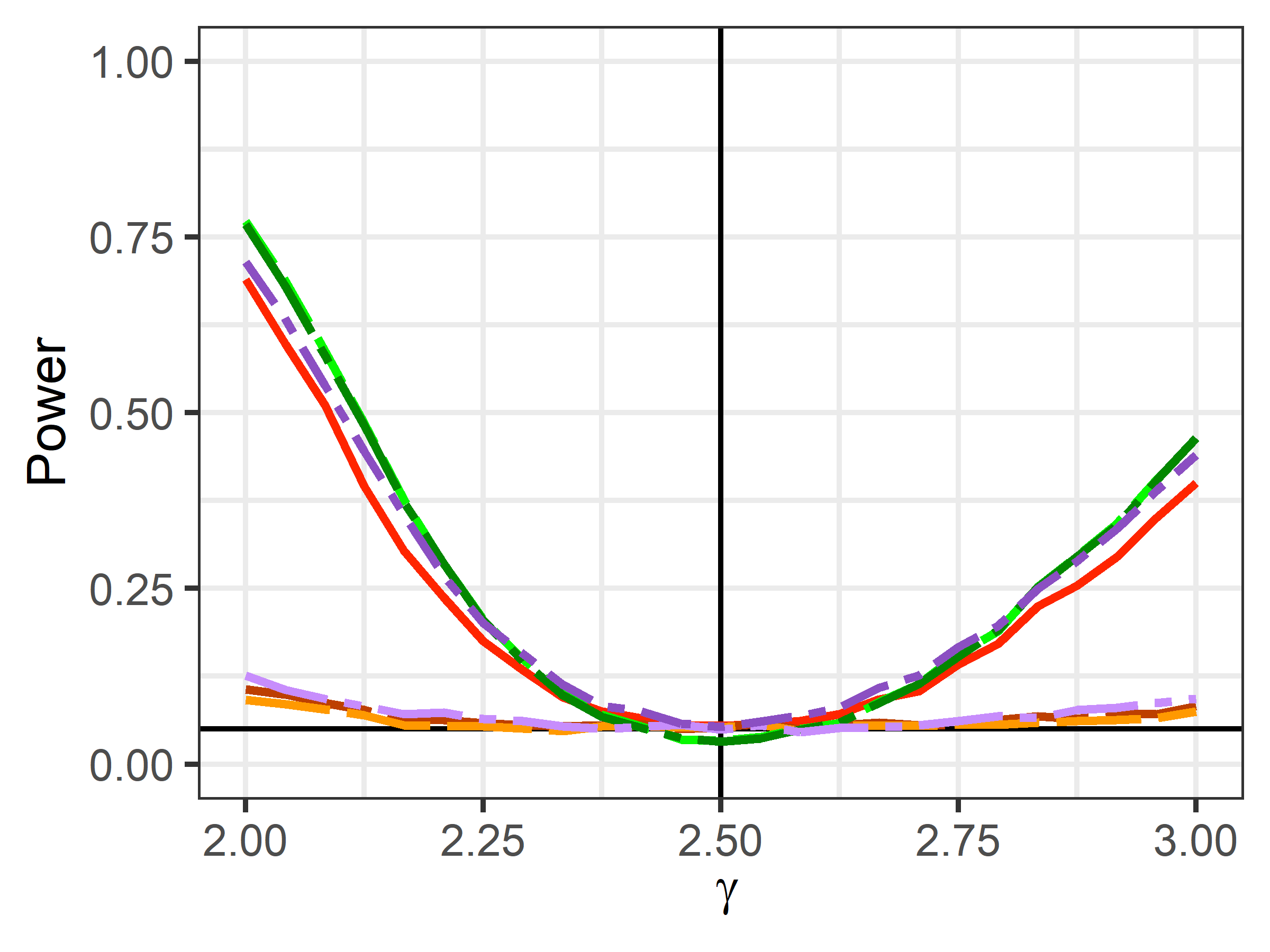}
        \caption{Study 1: Topology 1}
    \end{subfigure}%
    \begin{subfigure}[t]{0.4\textwidth}
        \centering
        \includegraphics[width=\textwidth]{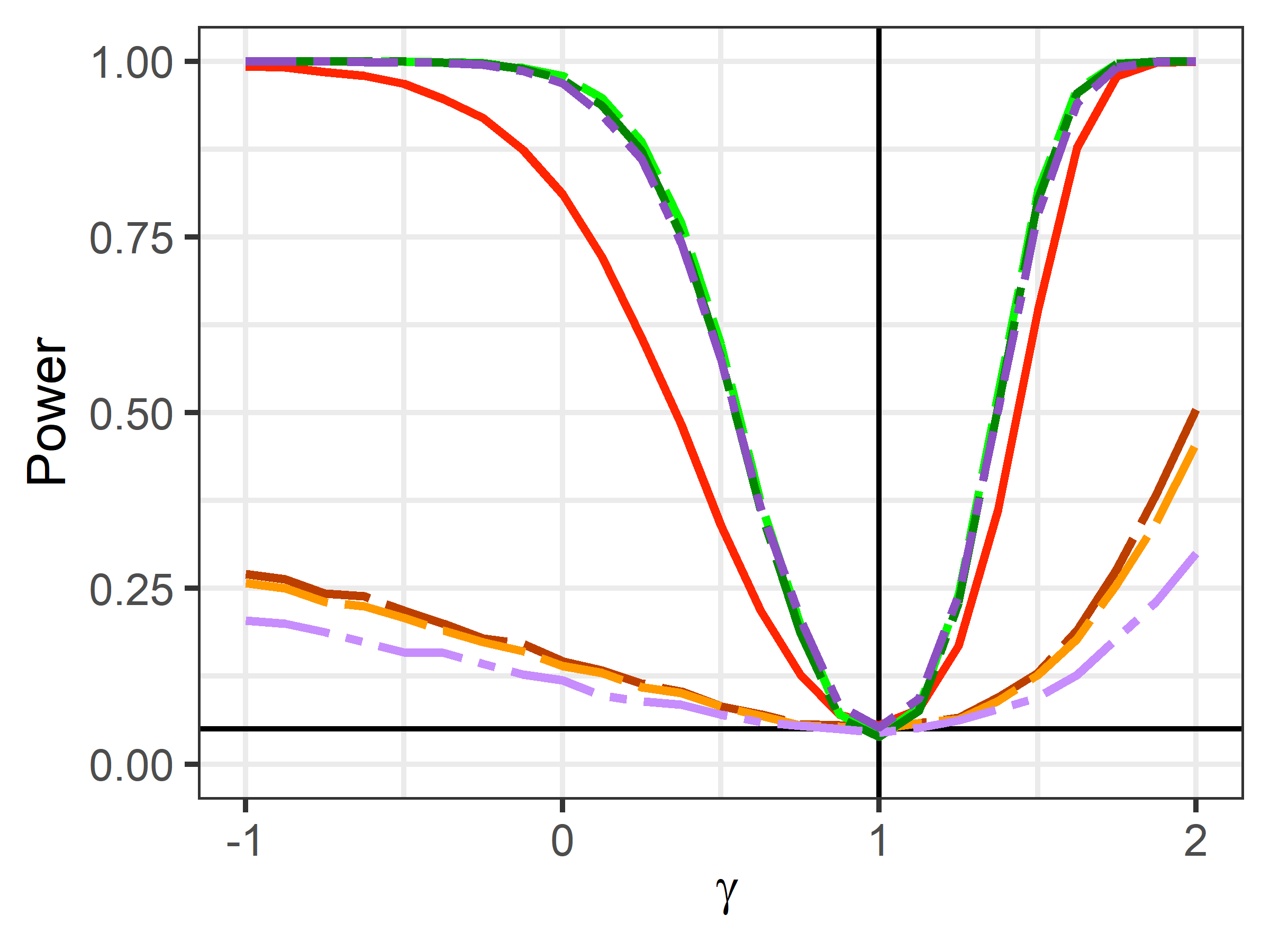}
        \caption{Study 2: Topology 2}
    \end{subfigure}
    \begin{subfigure}[t]{0.4\textwidth}
        \centering
        \includegraphics[width=\textwidth]{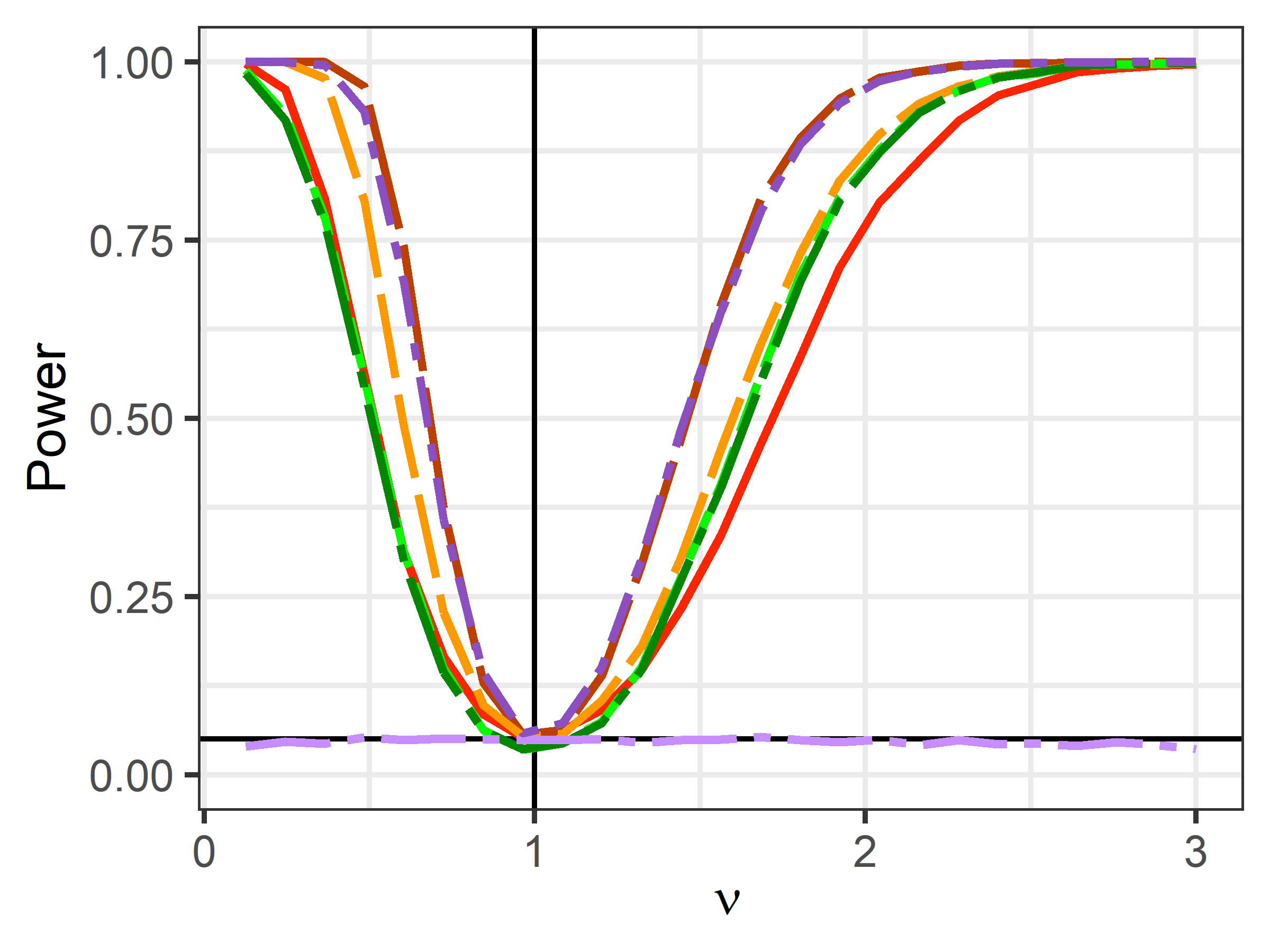}
        \caption{Study 3: Node Features}
    \end{subfigure}%
    \begin{subfigure}[t]{0.4\textwidth}
        \centering
        \includegraphics[width=\textwidth]{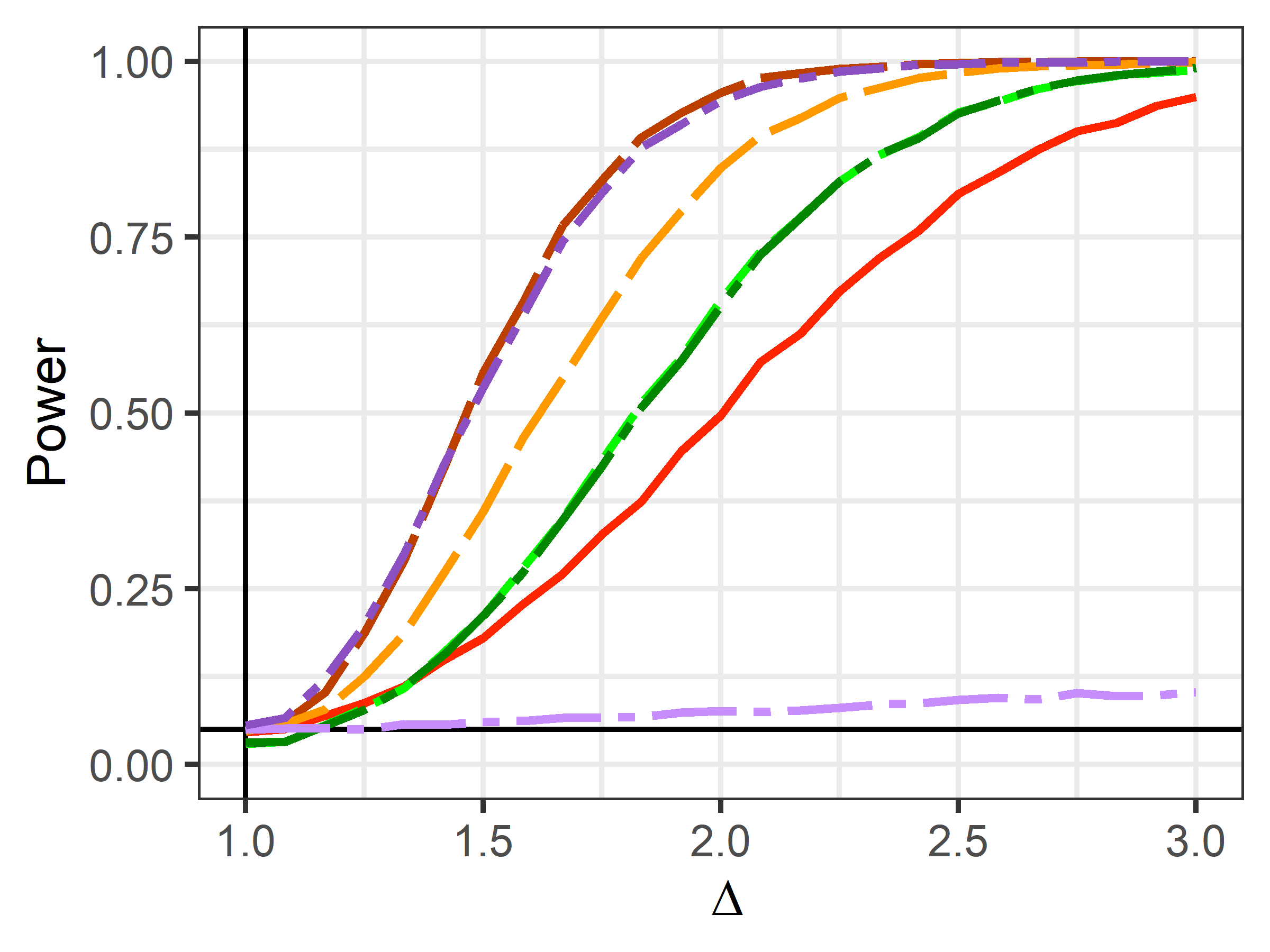}
        \caption{Study 4: Composite}
    \end{subfigure}
    \begin{subfigure}[t]{0.6\textwidth}
        \centering
        \includegraphics[width=0.99\textwidth]{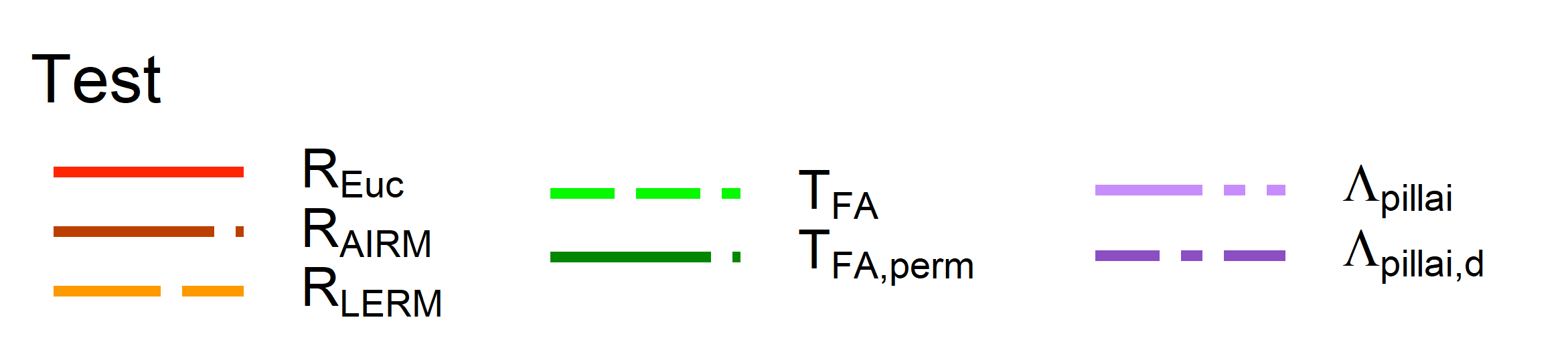}
    \end{subfigure}
    \caption{Results from each simulation study in scenario 2. In each plot, a vertical black line indicates the null hypothesis, and a horizontal black line indicates the desired Type I error rate, $\alpha=0.05$}
    \label{fig:sms_scen2}
\end{figure}

The results of the simulation studies are shown in Figure \ref{fig:sms_scen2}.
In study 1 the changing topology is best detected by the Fr\'echet ANOVA based tests, the Riemannian test using the underlying Euclidean metric, and the Pillai-Bartlett trace using distances.
In this case, the non-Euclidean geometry imposed by the AIRM and LERM metrics works against those tests.
It turns out that those tests are most sensitive to changes in determinants, owing to the behavior of the underlying $d_{LERM}$ and $d_{AIRM}$ distances. 
For instance, the \emph{shortest} path between two PSD matrices $A$ and $B$ with the same determinant will always follow a curve of constant determinant.
Empirically, changing $\gamma$ results in group covariance matrices which are different but have similar determinant, similarly for the pooled and mean group covariance matrices, meaning this study is particularly challenging for the $\bm{R_{LERM}}$ and $\bm{R_{AIRM}}$ tests.

Study 2 shows that $\bm{R_{LERM}}$ and $\bm{R_{AIRM}}$ have \emph{some} ability to detect changes in $\gamma$, but are much less powerful that other tests.
Study 3 reflects a more isolated difference between groups, where only the variance of $X_2$ and covariance between $X_1$ and $X_2$ change.
In this case, the $\bm{R_{AIRM}}$ and $\bm{R_{LERM}}$ are comparable to or are better than the Fr\'echet ANOVA based tests.

When combining both a changing $\gamma$ and a changing $\nu$ in study 4, we see that the $\bm{R_{AIRM}}$ and $\bm{\Lambda_{pillai,d}}$ tests have the greatest sensitivity to change, but again this is influenced by the relative magnitude of change of $\gamma$ and $\nu$.

In all four networks studies, the distance based Pillai-Bartlett test is among the best performing of all tests considered, which makes sense given that all studies involve changing variances and hence changing distances between the groups.
On the other hand, the adapted Euclidean Pillai-Bartlett test shows some power against changing $\gamma$, which affects the mean of $X_1$, but is not sensitive to changing $\nu$ at all, which affects only variances and covariances.

\subsection{Type I Error Rate}

\begin{table}[]
    \centering
    \begin{tabular}{c|c|c|c|c|c|c|c}
        \bf{Scenario} & $\bm{R_{Euc}}$ & $\bm{R_{AIRM}}$ & $\bm{R_{LERM}}$  & $\bm{T_{FA}}$ & $\bm{T_{FA,perm}}$ & $\bm{\Lambda_{pillai}}$& $\bm{\Lambda_{pillai,d}}$ \\
        \hline
         1 & 0.046 & 0.048 & 0.047 & 0.074 & 0.041 & 0.049 & 0.050\\
         2 & 0.045 & 0.046 & 0.044 & 0.033 & 0.030 & 0.051 & 0.051\\
    \end{tabular}
    \caption{Type I error rates for each test in the studies of scenarios 1 and 2. The Type I error is not a function of the alternative hypothesis so it is the same across all studies within each scenario. We simulated 6,000 data sets under each scenario, giving a standard error of approximately 0.003 for each estimate, and used 1,000 permutations for each permutation test. .}
    \label{tab:type1_err}
\end{table}

Table \ref{tab:type1_err} summarizes the Type I error for scenarios 1 and 2.
All tests are below the desired Type I error rate of 0.05 in both scenarios, with the exception of the Fr\'echet ANOVA based $\bm{T_{FA}}$ in Scenario 1, at 0.074.
This statistic relies on asymptotic results for the Fr\'echet variance, and presumptive (not yet proven) asymptotic results for the Fr\'echet covariance, so the over-rejection is likely explained by a lack of convergence to the asymptotic distribution in the tail of the Fr\'echet ANOVA statistics.
Indeed, the same over-rejection is not seen in the permutation version of the test.

The conservative nature of the Bonferronni correction can be seen in both the Riemannian based tests, $\bm{R_{Euc}}$, $\bm{R_{AIRM}}$, $\bm{R_{LERM}}$, and the Fr\'echet ANOVA based tests, $\bm{T_{FA}}$ and $\bm{T_{FA,perm}}$, which reject below the nominal level.
As the number of metric spaces $S$ increases, the Fr\'echet ANOVA based tests must correct for a greater number of underlying tests, while the number of underlying test statistics for the Riemannian based tests remain at three.
The lowest Type I error rate is seen for $\bm{T_{FA}}$ and $\bm{T_{FA,perm}}$ in scenario 2, resulting from correlation between the underlying test statistics.
We expect correlation to be common among the Fr\'echet ANOVA based statistics due to the correlation between the estimated elements of the Fr\'echet covariance matrices.
The Pillai tests, $\bm{\Lambda_{pillai}}$ and $\bm{\Lambda_{pillai,d}}$, don't use a Bonforroni correction, and appear to reject at the nominal level.

\subsection{Effect of Sample Size and Group Balance}

\begin{figure}
    \centering
    \includegraphics[width=\textwidth]{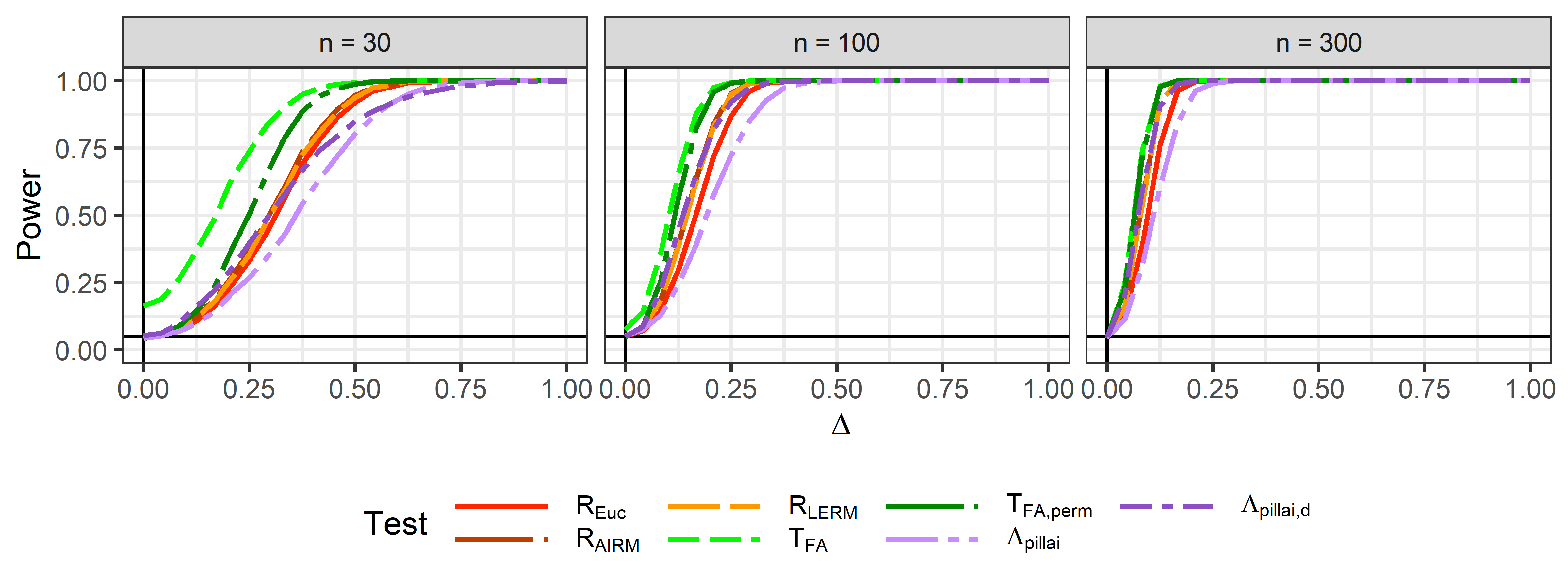}
    \caption{Results of study 5 in scenario 1 at three different sample sizes (equal for each group). In each plot, a vertical black line indicates the null hypothesis, and a horizontal black line indicates the desired Type I error rate, $\alpha=0.05$.}
    \label{fig:sms_ss}
\end{figure}

\begin{figure}
    \centering
    \includegraphics[width=\textwidth]{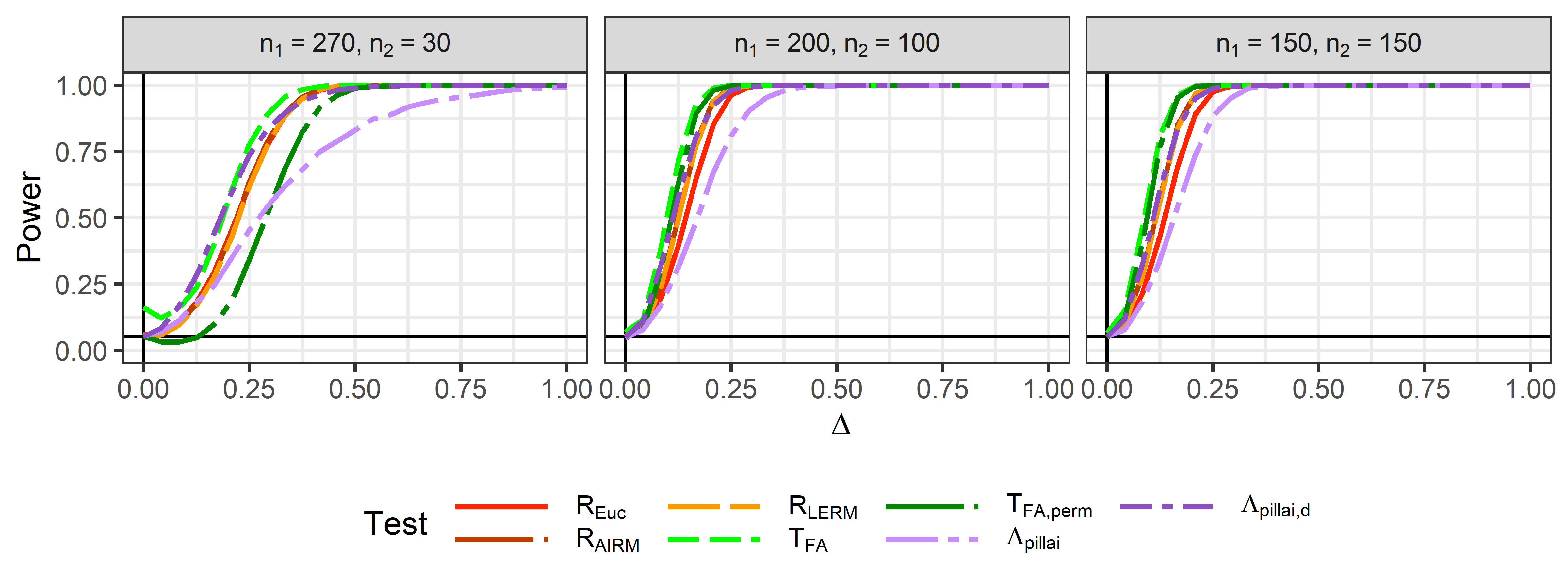}
    \caption{Results of study 5 in scenario 1 with varying balance (unequal samples sizes). In each plot, a vertical black line indicates the null hypothesis, and a horizontal black line indicates the desired Type I error rate, $\alpha=0.05$.}
    \label{fig:sms_im}
\end{figure}

Finally, we investigate how sample size and balanced group sizes affect the performance of each test in study 5 of scenario 1.
Figure \ref{fig:sms_ss} shows power curves under three different sample sizes: $n_1 = n_2 = 30$, $n_1 = n_2 = 100$, and $n_1 = n_2 = 300$.
As expected, larger sample sizes lead to increased power at all effect sizes across all tests examined.
However we see the inflated Type I error rate (shown at $\Delta=0$ in Figure \ref{fig:sms_ss}) for the asymptotic Fr\'echet ANOVA based test, $\bm{T_{FA}}$, unsurprising given its reliance on large sample asymptotics.
However, the permutation version of the Fr\'echet ANOVA based test, $\bm{T_{FA,perm}}$, shows a more appropriate Type I error and the best power among the tests (excluding $\bm{T_{FA}}$) at the lower sample sizes.

Figure \ref{fig:sms_im} shows the results of varying the relative sample size of each group, keeping the total sample size the same ($n_1 + n_2 = 300)$.
We consider highly imbalanced ($n_1 = 270,~n_2 = 30$), moderately imbalanced ($n_1=100, n_2=200$), and balanced ($n_1 = 150, n_2 = 150$) situations.
The inflated Type I error rate ($\Delta = 0$) of $\bm{T_{FA}}$ is again present in the highly imbalanced case, for the same reason as in the balanced design with $n_1 = n_2 =30$, since group 2 has few observations.
Among the other tests, the Riemannian tests and distance based Pillai-Bartlett tests are best in the highly imbalanced case.

\section{Conclusions}

In expanding the setting of random objects to multiple metric spaces, we defined the Fr\'echet analog to covariance.
We motivated two possible definitions and explored their interpretations, as well as established consistency for the sample estimator for one of them, which allowed the construction of a full metric space covariance matrix.
Using this matrix, we considered various tests for differences in mean and covariance structure in multiple metric spaces between two or more groups.
Finally, we examined the performance of these tests under two simulation scenarios and studied their power characteristics under a variety of  departures from the null distribution.

It is clear from our simulations that no single proposed test is universally preferred, a result reminiscent of prior work comparing the performance of the various classical MANOVA tests~\cite{finch_comparison_2005, ates_comparison_2019}.
However, the adaptations of the Pillai-Bartlett test appeared to fall short of the performance of the other tests: $\bm{\Lambda_{pillai,d}}$ because it only sees the within group distances, $\bm{\Lambda_{pillai}}$ because it is not designed to test for variances. 
A possible augmentation to either $\bm{\Lambda_{pillai,d}}$ or $\bm{\Lambda_{pillai}}$ would be to incorporate some adaptation of Box's M test~\cite{box_ra_1980}, though performance may be poor when normality assumptions do not hold, as noted in~\cite{manly_multivariate_2016}.

Permutation tests provided a path to testing for group differences in the absence of asymptotic results, which is particularly useful when distributional properties are unknown.
However, the desire remains to establish asymptotic results for the Fr\'echet covariance, and theoretically justify the Riemannian approaches, if possible showing asymptotically favorable power as sample size grows.

Despite recent progress on working with random objects in metric spaces, many existing challenges remain. 
Among these are the selection of an appropriate metric, the mechanic of its computation in various metric spaces of interest, and efficient estimation of the sample Fr\'echet mean.
Regardless, future prospects of the metric space framing of data sets appear promising.

\bibliographystyle{unsrt}  
\bibliography{references}  

\appendix
\section{Metric Spaces}

A \emph{metric} on a set $\Omega$ (not to be confused with $\Omega(p,q)$, the space of all binary matrices satisfying margins $p$ and $q$) is a function $d:\Omega \times \Omega \rightarrow [0, \infty)$ such that, for all $\omega_1, \omega_2, \omega_3 \in \Omega$, $d$ satisfies the following:
\begin{enumerate}
    \item $d(\omega_1, \omega_2) = 0 \Leftrightarrow \omega_1 = \omega_2$
    \item $d(\omega_1, \omega_2) = d(\omega_2, \omega_1)$
    \item $d(\omega_1, \omega_2) \leq d(\omega_1, \omega_3) + d(\omega_3, \omega_2)$.
\end{enumerate}
A \emph{metric space} is a non-empty set endowed with a metric.
When there are multiple metric spaces, we given them subscripts, for example $\Omega_1, \ldots, \Omega_D$.

Some well known metrics that we use in this dissertation include the Wasserstein metric between probability spaces, and the Frobenius metric between matrices.
Consider the space $M$ of probability distributions for which every Borel probability measure on $M$ is also a Radon measure, and assume a metric $d$ is defined for this space.
Then let $P_p(M)$ consist of all probability measures on $M$ with finite $p$th moment. 
The \emph{Wasserstein-p metric} between two probability measures $\mu$ and $\nu$ in $P_p(M)$ is defined as
\begin{align}
    W_p(\mu, \nu) = \text{inf}~\left(E[d(X,Y)^p] ^{1/p}\right),
\end{align}
where the infimum is taken over all joint distributions of $X$ and $Y$ with marginals $\mu$ and $\nu$, respectively.
For example, comparing univariate normal distributions with identical variance, say $X \sim N(a, 1)$ and $Y \sim N(b, 1)$, the Wasserstein-2 metric is simply the absolute difference in their means, $|a-b|$.

The \emph{Frobenius norm} of a real valued $m \times n$ matrix $A$ is 
\begin{align}
    ||A||_F = \sqrt{\sum_{i=1}^m \sum_{j=1}^n a_{ij}^2} = \sqrt{\sum_{i}^{\text{min}(m, n)} \lambda_i},
\end{align}
where $\lambda_i$ are the eigenvalues of $A^T A$.
This norm admits a metric between two $m \times n$ matrices $A$ and $B$ :$d(A, B) = ||A-B||_F$.

\section{Riemannian Manifolds}
\label{chap_sec:riem_manifolds}

A \emph{topological space} is an ordered pair $(X, \tau)$ where $X$ is a set of points and $\tau$ is a collection of subsets of $X$ satisfying:
\begin{enumerate}
    \item $X \in \tau$ and $\emptyset \in \tau$,
    \item $\tau$ is closed under countable unions: that is, for $A_1, \ldots \in \tau$, $\bigcup_i A_i \in \tau$,
    \item $\tau$ is closed under finite intersections, that is, for $A_1, \ldots, A_n \in \tau$, $\bigcap_i A_i \in \tau$.
\end{enumerate}
The collection $\tau$ is called a topology on $X$, and the elements of $\tau$ are called open subsets.
A \emph{homeomorphism} between two topological spaces $T_1$ and $T_2$ is a function $f: T_1 \rightarrow T_2$ such that $f$ is a bijection, $f$ is continuous, and $f^{-1}$ is continuous.
Loosely speaking, the existence of a homeomorphism implies that $T_1$ can be continuously deformed into $T_2$, and vice versa.

A \emph{manifold} $M$ is a topological space where for each point in the topology, there exists a local neighborhood which is homeomorphic to an open subset of $\mathbb{R}^n$ for some $n$.
Manifolds may be described using a set of \emph{charts}, $A = \{(U_\alpha, \phi_\alpha)\}_\alpha$, each of which consists of an open subset $U_\alpha \in M$ and a homeomorphism $\phi_\alpha$ from $U_\alpha$ to a subset of $\mathbb{R}^n$.
The subsets $U_\alpha$ cover $M$, that is, $\bigcup_\alpha U_\alpha = M$, and $A$ is called an \emph{atlas}.
Where any two open subsets $U_\alpha$ and $U_\beta$ overlap, we can define the \emph{transition map} $\phi_\beta^\alpha = \phi_\beta \phi_\alpha^{-1}$.
An atlas where all transition maps are differentiable is called a \emph{differentiable atlas}.
This means that overlapping charts have ``similar'' maps to $\mathbb{R}^n$ in their overlap.
A chart $(U, \phi)$ is \emph{compatible} with a differentiable atlas if its inclusion in the atlas results in a differentiable atlas, and a maximal differentiable atlas consists of all charts which are compatable with the given atlas.

A \emph{differentiable manifold} $M$ is a topological space with three additional properties: it is Hausdorff, second countable, and possesses a maximal differentiable atlas.
To be Hausdorff, any two distinct points in $M$ must have disjoint neighborhoods.
To be second countable, the topology of $M$ must have a countable base; that is, there exists some countable collection $\{U_i\}_{i=1}^\infty$ with $U_i \in \tau$ such that any element in $\tau$ is a union of some set of $U_i$'s. 
Differentiable manifolds allow for the construction of a calculus on the topological space, since each chart is homeomorphic to $\mathbb{R}^n$ and all transition maps are differentiable as well.
This leads to the idea of tangent vectors at a point on the manifold, and a \emph{tangent space} at each point on the manifold, which is a vector space consisting of all tangent vectors at that point.

A \emph{Riemannian manifold} $M$ is a differentiable manifold in which the tangent space at each point $p \in M$ is equipped with a positive definite inner product, and the inner product varies smoothly with the point $p$.
Any vector space with an inner product admits a norm and a corresponding metric, so every tangent space in a Riemannian manifold is also equipped with a \emph{Riemannian metric} that smoothly varies with $p \in M$.
If $M$ is connected (i.e. it cannot be expressed as a disjoint union of two nonempty open subsets), then it is possible to define the distance along a path between points $p, q \in M$ by integrating the Riemannian metric along that path.
The shortest possible path length is the \emph{geodesic}, and defines a metric on $M$, so every connected Riemannian manifold is also a metric space.
Different choices of Riemannian metric (at a point $p$) will result in different metrics on the space $M$.

The space of symmetric positive definite (SPD) matrices can be thought of as a cone in Euclidean space, hence it is also a differentiable manifold.
There are several choices for Riemannian metric, and 
each metric gives rise to a different shortest path and shortest path length between two symmetric positive definite matrices.
Three common choices are the standard Euclidean metric (Euc), the affine invariant Riemannian metric (AIRM), and the log-Euclidean Riemannian metric (LERM)~\cite{pennec_riemannian_2006, arsigny_geometric_2007, you_re-visiting_2021, moakher_differential_2005}.
The Euclidean Riemannian metric implies the distance between two SPD matrices $A$ and $B$ is 
\begin{equation}
    d_{Euc}(A, B) = || A - B ||_F = \sqrt{\sum_i \lambda_i^2\left(A-B\right)}\label{eq:EUC_metric},
\end{equation}
where $||\cdot||_F$ is the Frobenius norm.
As noted in \cite{pennec_riemannian_2006, lin_riemannian_2019}, this metric can lead to determinant swelling when used to compute means, that is, the Fr\'echet mean of a set of SPD matrices may have determinant larger than any of the individual matrices.
They also note that shortest paths between two matrices of the same determinant will not necessarily preserve the determinant along the path.
This is used to motivate the AIRM and LERM Riemannian metrics, which do not suffer from the same defficiencies.

In the AIRM geometry, the distance between two SPD matrices $A$ and $B$ is given by
\begin{equation}
    d_{AIRM}(A, B) = || \text{Log}\left(A^{-1/2}B A^{-1/2}\right) ||_F = \sqrt{\sum_i \text{log}^2\left[\lambda_i(A^{-1/2} B A^{-1/2})\right]},
    \label{eq:AIRM_metric}
\end{equation}
where Log denotes the principal matrix logarithm (the inverse of the exponential map).
The \textit{affine invariant} naming comes from the fact that the underlying Riemannian metric is invariant to affine transformations (see \cite{pennec_riemannian_2006} for details).
Note that because $A^{-1/2} B A^{-1/2}$ and $A^{-1} B$ are similar matrices, they have the same eigenvalues, which allows us to avoid taking the square root of $A^{-1}$ when computing $d_{AIRM}$.

In the LERM geometry, the distance between two SPD matrices $A$ and $B$ is given by 
\begin{equation}
    d_{LERM}(A, B) = || \text{Log}(A) - \text{Log}(B) ||_F\label{eq:LERM_metric},
\end{equation}
which is simply the Euclidean metric after $A$ and $B$ are mapped to the set of symmetric matrices via the principal matrix logarithm, hence the name.

\section{Matrix Logarithm}
The LERM and AIRM Riemannian metrics lead to distances on the space of symmetric positive definite matrices which rely on the matrix logarithm.
The exponential map of an $n \times n$ matrix $A$ is given by the power series
\begin{align}
    \exp{A} = \sum_{k=0}^\infty \frac{A^k}{k!}.
\end{align}
Matrix B is a matrix logarithm of $A$ if $\exp{B} = A$.
Since positive definite matrix $A$ admits an eigenvalue decomposition, $A = U D U^T$, we can define the \emph{principal matrix logarithm} as 
\begin{align}
    \text{Log}(A) = U \text{Log}(D) U^T,
\end{align}
Where $\text{Log}(D)$ is a diagonal matrix where each element is the (scalar) natural log of the corresponding eigenvalue of $A$.

\section{Proof of Consistency (Theorem \ref{thm:cov_consistency})}
\begin{proof}[Proof of Theorem \ref{thm:cov_consistency}]
    Note that $d(\hat{\mu}_s, \mu_s)\overset{p}\rightarrow~0$ by the continuous mapping theorem and consistency of $\hat\mu_s$, similarly for $X_{s'}$.
    Then 
    \begin{align}
        |\widehat{\sigma}_{ss'} - \sigma_{ss'}| &= \left\lvert \frac{1}{n} \sum_{i=1}^n d(\hat{\mu}_s, X_{si})d(\hat{\mu}_{s'}, X_{s'i}) - E[d(\mu_s, X_s)d(\mu_{s'}, X_{s'})]\right\rvert \\
        &= |A + B + C | \\
        &\leq |A| + |B| + |C|
    \end{align}
    where 
    \begin{align}
        A &= \frac{1}{n} \sum_{i=1}^n \left[ d(X_{si}, \hat\mu_{s}) - d(X_{si}, \mu_{s}) \right] \left[ d(X_{s'i}, \hat\mu_{s'}) - d(X_{s'i}, \mu_{s'}) \right] \\
        B &= \frac{1}{n} \sum_{i=1}^n d(X_{si}, \hat\mu_{s})d(X_{s'i}, \mu_{s'}) + d(X_{si}, \mu_{s})d(X_{s'i}, \hat\mu_{s'}) - 2d(X_{si}, \mu_{s})d(X_{s'i}, \mu_{s'}) \\
        C &= \frac{1}{n} \sum_{i=1}^n d(X_{si}, \mu_{s})d(X_{s'i}, \mu_{s'}) - E\left[d(X_{si}, \mu_{s})d(X_{s'i}, \mu_{s'})\right]
    \end{align}
    Each of $|A|$, $|B|$, and $|C|$ converges to zero in probability as $n \rightarrow \infty$, as follows:
    \begin{align}
        |A| & \leq \frac{1}{n} \sum_{i=1}^n \left\lvert d(X_{si}, \hat\mu_{s}) - d(X_{si}, \mu_{s}) \right\rvert \left\lvert d(X_{s'i}, \hat\mu_{s'}) - d(X_{s'i}, \mu_{s'}) \right\rvert \\
        &\leq d(\hat\mu_{s}, \mu_{s}) d(\hat\mu_{s'}, \mu_{s'}).
    \end{align}
    Since $d(\hat\mu_{s}, \mu_{s})$ and $d(\hat\mu_{s'}, \mu_{s'})$ converge to zero in probability, so does $|A|$.
    For $B$,
    \begin{align}
        |B| &= \left\lvert \frac{1}{n} \sum_{i=1}^n \left[ d(X_{si}, \hat\mu_{s}) - d(X_{si}, \mu_{s}) \right] d(X_{s'i}, \mu_{s'}) + d(X_{si}, \mu_{s}) \left[ d(X_{s'i}, \hat\mu_{s'}) - d(X_{s'i}, \mu_{s'}) \right] \right\rvert \\
        &\leq \frac{1}{n} \sum_{i=1}^n \left\lvert d(X_{si}, \hat\mu_{s}) - d(X_{si}, \mu_{s}) \right\rvert d(X_{s'i}, \mu_{s'}) + \notag\\
        &\quad\quad\quad\frac{1}{n} \sum_{1=1}^n d(X_{si}, \mu_{s}) \left\lvert d(X_{s'i}, \hat\mu_{s'}) - d(X_{s'i}, \mu_{s'}) \right\rvert \\
        &\leq \frac{1}{n} \sum_{i=1}^n d(\hat\mu_{s}, \mu_{s}) d(X_{s'i}, \mu_{s'}) + d(X_{si}, \mu_{s}) d(\hat\mu_{s'}, \mu_{s'}),
    \end{align}
    which converges to zero in probability from the consistency of $\hat\mu_{s}$ and $\hat\mu_{s'}$, as well as the boundedness of both metric spaces.
    Finally, $|C|$ converges to zero in probability by the weak law of large numbers and the boundedness of the metric spaces.
    Consistency follows from Slutsky's theorem.
\end{proof}

\end{document}